\documentclass[11pt]{amsart}

\usepackage{amssymb,amscd,amsmath,hyperref,color,enumerate}
\usepackage{ifthen}
\makeatletter

\makeatother

\usepackage[margin=3cm]{geometry}

\newcommand{\N}{{\ensuremath{\mathbb{N}}}}
\newcommand{\Z}{{\ensuremath{\mathbb{Z}}}}
\newcommand{\Q}{{\ensuremath{\mathbb{Q}}}}
\newcommand{\C}{{\ensuremath{\mathbb{C}}}}

\newcommand{\K}{{\ensuremath{\mathbb{K}}}}

\newcommand{\F}{{\ensuremath{\mathbb{F}}}}

\usepackage[normalem]{ulem}

\newcommand{\stkout}[1]{\ifmmode\text{\sout{\ensuremath{#1}}}\else\sout{#1}\fi}

\DeclareMathOperator{\CIm}{Im}
\DeclareMathOperator{\supp}{supp}

\DeclareMathOperator{\diag}{diag}

\DeclareMathOperator{\idem}{Idem}
\DeclareMathOperator{\pot}{Pot}

\DeclareMathOperator{\spn}{span}
\DeclareMathOperator{\chr}{char}

\DeclareFontFamily{U}{mathb}{\hyphenchar\font45}
\DeclareFontShape{U}{mathb}{m}{n}{
	<5> <6> <7> <8> <9> <10> gen * mathb
	<10.95> mathb10
	<12> <14.4> <17.28> <20.74> <24.88> mathb12
}{}
\DeclareSymbolFont{mathb}{U}{mathb}{m}{n}

% type=3 = \mathrel
\DeclareMathSymbol{\precneq}{3}{mathb}{"AC}

%%%Commands%%%
\newcommand{\Sph}{\ensuremath{\mathbb{S}}}

\newcommand{\ca}[1]{\ensuremath{\mathcal{#1}}}

\newtheorem{proposition}{Proposition}[section]
\newtheorem{lemma}[proposition]{Lemma}

\newtheorem{theorem}[proposition]{Theorem}

\newtheorem{corollary}[proposition]{Corollary}
\theoremstyle{definition}

\newtheorem{example}[proposition]{Example}

\newtheorem{remark}[proposition]{Remark}

\numberwithin{equation}{section}

\newlength{\leftstackrelawd}
\newlength{\leftstackrelbwd}
\def\leftstackrel#1#2{\settowidth{\leftstackrelawd}%
	{${{}^{#1}}$}\settowidth{\leftstackrelbwd}{$#2$}%
	\addtolength{\leftstackrelawd}{-\leftstackrelbwd}%
	\leavevmode\ifthenelse{\lengthtest{\leftstackrelawd>0pt}}%
	{\kern-.5\leftstackrelawd}{}\mathrel{\mathop{#2}\limits^{#1}}}

\begin{document}
	
	\title[The structure of $k$-potents and mixed Jordan--power preservers]{The structure of $k$-potents and mixed Jordan--power preservers on matrix algebras}
	
	\author{Ilja Gogi\'{c}, Mateo Toma\v{s}evi\'{c}}
	
	\address{I.~Gogi\'c, Department of Mathematics, Faculty of Science, University of Zagreb, Bijeni\v{c}ka 30, 10000 Zagreb, Croatia}
	\email{ilja@math.hr}
	
	\address{M.~Toma\v{s}evi\'c, Department of Mathematics, Faculty of Science, University of Zagreb, Bijeni\v{c}ka 30, 10000 Zagreb, Croatia}
	\email{mateo.tomasevic@math.hr}
	
	%\thanks{We thank...}
	
	\keywords{matrix algebra, $k$-potent, mixed Jordan--power preserver, Jordan multiplicative map, Jordan homomorphism, automatic additivity}

	\subjclass[2020]{47B49, 16S50, 16W20, 17C27}
	
	\date{\today}
	
	\begin{abstract}
		Let $M_n(\mathbb{F})$ denote the algebra of $n \times n$ matrices over an algebraically closed field $\mathbb{F}$ of characteristic different from $2$. For $n \ge 2$, we classify all maps 
		$\phi : M_n(\mathbb{F}) \to M_n(\mathbb{F})$ satisfying the mixed Jordan--power identity
		$$
		\phi(A^{k} \circ B) = \phi(A)^{k} \circ \phi(B), \quad \text{for all } A,B \in M_n(\mathbb{F}),
		$$
		where $\circ$ denotes the (normalized) Jordan product $A \circ B := \tfrac{1}{2}(AB + BA)$ and $k \in \mathbb{N}$. We show that every such map is either constant, taking a fixed $(k+1)$-potent value, or there exist an invertible matrix $T \in M_n(\mathbb{F})$, a ring monomorphism $\omega : \mathbb{F} \to \mathbb{F}$, and a $k$-th root of unity $\varepsilon \in \mathbb{F}$ such that $\phi$ takes one of the forms
		$$
		\phi(X) = \varepsilon\, T\, \omega(X)\, T^{-1} \quad \text{ or } \quad \phi(X) = \varepsilon\, T\, \omega(X)^{t}\, T^{-1},
		$$
		where $\omega(X)$ denotes the matrix obtained by applying $\omega$ entrywise to $X$, and $(\cdot)^{t}$ denotes matrix transposition. In particular, every nonconstant solution is necessarily additive. The classification relies fundamentally on the preservation of $(k+1)$-potents and their intrinsic structural properties.
	\end{abstract}
	
	\maketitle
	
	\allowdisplaybreaks
	\section{Introduction}
	
	The study of multiplicative maps between rings and algebras has a long and substantial history, centered on the question of when multiplicativity alone enforces stronger algebraic behavior, most notably additivity. Foundational contributions were made by Rickart~\cite{Rickart} in 1948 and Johnson~\cite{Johnson} in 1958, who established positive results under suitable structural assumptions and highlighted the connection with the rigidity of the additive structure of rings. A major advance was achieved by Martindale~\cite{Martindale} in 1969, who proved that every bijective multiplicative map from a prime ring containing a nontrivial idempotent onto an arbitrary ring is automatically additive. In the same year, Jodeit and Lam~\cite{JodeitLam} classified all nondegenerate multiplicative self-maps of the matrix rings $M_n(\mathcal{R})$ over a principal ideal domain~$\mathcal{R}$, where nondegeneracy means that the map is not identically zero on the set of matrices with zero determinant. Specifically, for each such map $\phi: M_n(\ca{R}) \to M_n(\ca{R})$, either there exists a nonzero idempotent matrix $P \in M_n(\ca{R})$ such that $\phi - P$ is multiplicative and degenerate, or there exists an invertible matrix $T \in M_n(\ca{R})$ and a ring endomorphism $\omega$ of $\ca{R}$ such that $\phi$ takes one of the following two distinct forms:
	$$
	\phi(X) = T \, \omega(X) \, T^{-1} \quad \text{ or } \quad \phi(X) = T \, \omega(X)^* \, T^{-1},
	$$
	where $\omega(X)$ denotes the matrix obtained by applying $\omega$ entrywise, and $(\cdot)^*$ is the corresponding cofactor matrix. In particular, all bijective multiplicative self-maps on $M_n(\mathcal{R})$ are automatically additive and hence ring automorphisms. Taken together, these results expose a fundamental phenomenon: multiplicativity, when coupled with bijectivity, interacts strongly with the ambient ring structure, leaving little room for pathological nonadditive behavior. This rigidity principle later emerged as a central theme in preserver theory, especially in the context of matrix and operator algebras.
	
	This viewpoint was later extended from ordinary ring homomorphisms to \emph{Jordan homomorphisms}, which originate as homomorphisms in the category of \emph{Jordan algebras}, introduced by Jordan, von Neumann, and Wigner \cite{Jordan} in the 1930s as algebraic models for quantum observables.
	A \emph{Jordan algebra} is a nonassociative algebra $\ca{A}$ over a field equipped with a commutative multiplication $\diamond$ satisfying the \emph{Jordan identity}
	$$
	a^2 \diamond (a \diamond b) = a \diamond (a^2 \diamond b), \quad \text{ for all } a,b \in \ca{A}.
	$$
	Jordan rings are similarly defined. Every associative algebra (or ring) $\mathcal{A}$ naturally becomes a Jordan algebra (ring) when equipped with the \emph{Jordan product}
	$$
	a \diamond b := ab + ba.
	$$
	A Jordan algebra that is isomorphic to a Jordan subalgebra of an associative algebra is called a \emph{special Jordan algebra} (see e.g. \cite[Section~2.3]{HancheStormer}). Not all Jordan algebras are special; the Albert algebra~\cite{Albert} provides the classical example of an \emph{exceptional Jordan algebra}.
	
	In an associative setting, a linear (or additive) map $\phi: \ca{A} \to \ca{B}$ between associative algebras (or rings) $\ca{A}$ and $\ca{B}$ is called a \emph{Jordan homomorphism} if it satisfies
	$$
	\phi(a \diamond b) = \phi(a) \diamond \phi(b), \quad \text{ for all } a, b \in \ca{A}.
	$$
	If the algebras (rings) are $2$-torsion-free, this is equivalent to requiring that $\phi$ preserves squares with respect to the underlying product of $\mathcal{A}$, that is,
	$$
	\phi(a^2) = \phi(a)^2, \quad \text{ for all } a \in \ca{A}.
	$$
	Moreover, if $\ca{A}$ is an algebra over a field $\F$ with $\chr(\F) \ne 2$, the Jordan structure is commonly (esp.\ in quantum mechanics contexts) induced by the \emph{normalized Jordan product}
	\begin{equation}\label{eq:normJprodukt}
		a\circ b :=\frac12(ab+ba),
	\end{equation}
	and Jordan homomorphisms are equivalently defined as $\circ$-multiplicative linear maps. The study of Jordan homomorphisms in associative settings is well established and constitutes a major research direction in abstract algebra, operator theory, and the theory of preservers.

	The problem of automatic additivity for \emph{Jordan multiplicative maps} has been studied extensively since the early 2000s. This line of research focuses on maps that preserve the Jordan product, $\circ$ or $\diamond$, viewed as a natural analogue of the classical automatic additivity problem for multiplicative maps. A seminal contribution was made by Moln\'ar \cite{Molnar}, who characterized $\circ$-preserving bijections $\phi:\mathcal{A} \to \mathcal{B}$ between standard operator algebras $\mathcal{A}$ and $\mathcal{B}$ with $\dim(\mathcal{A})>1$, that is, subalgebras of bounded linear operators on a complex Banach space containing all finite-rank operators. In particular, all such maps are automatically additive. The finite-dimensional variant of Moln\'ar's result  asserts that any $\circ$-preserving bijection $\phi : M_n(\C)\to M_n(\C)$, $n \ge 2$, takes the form
	\begin{equation}\label{eq:glavnieq}
		\phi(X)=T\,\omega(X)\,T^{-1} \quad \text{ or } \quad \phi(X)=T\,\omega(X)^t\,T^{-1},
	\end{equation}
	where $T\in M_n(\C)$ is invertible, $\omega$ is a ring automorphism of $\C$ applied entrywise, and $(\cdot)^t$ denotes matrix transposition. Lu \cite{Lu} subsequently extended this result to additional classes of associative algebras. Subsequently, An and Hou \cite{AnHou} studied Jordan multiplicative bijections on the real Jordan algebra of all self-adjoint operators on a complex Hilbert space of dimension greater than $1$, proving that any $\diamond$- or $\circ$-preserving bijection is automatically additive and implemented via unitary or anti-unitary conjugation. The problem of automatic additivity has also been investigated in the context of abstract Jordan algebras. In particular, Ji \cite{Ji} showed that if $\mathcal{A}$ is a Jordan algebra containing an idempotent satisfying suitable Peirce-type structural conditions, then any bijection $\phi:\mathcal{A}\to \mathcal{B}$ to another Jordan algebra $\mathcal{B}$ that preserves the Jordan product is necessarily additive, thereby unifying a range of operator-algebraic and ring-theoretic results within a single framework.

	In the setting of full matrix algebras, our recent work \cite{GogicTomasevic-AM} provides a complete classification of all  $\circ$-preserving maps $\phi : M_n(\mathbb{F}) \to M_n(\mathbb{F})$, where $\chr(\mathbb{F}) \neq 2$. We show that every such map is either constant (and hence equal to a fixed idempotent) or a Jordan ring monomorphism of the form~\eqref{eq:glavnieq}, where $T \in M_n(\mathbb{F})$ is invertible and $\omega : \mathbb{F} \to \mathbb{F}$ is a ring monomorphism \cite[Theorem~1.1]{GogicTomasevic-AM}.   This generalizes the finite-dimensional version of Molnár's theorem. Moreover, if $\phi : \mathcal{A} \to \mathcal{B}$ is any $\diamond$-preserving map between $\mathbb{F}$-algebras (with $\chr(\mathbb{F}) \neq 2$), then the simple normalization
	$$
	\psi(x) := 2\,\phi\left(\frac{x}{2}\right), \quad x \in \mathcal{A},
	$$
	transforms $\phi$ into a $\circ$-preserving map $\psi$.  This reduction allows us to obtain a corresponding classification of $\diamond$-preserving maps on $M_n(\mathbb{F})$, 
	with the only difference that the constant map now takes the value $P/2$ for some idempotent $P \in M_n(\mathbb{F})$. In particular, this result shows that Jordan multiplicative maps on $M_n(\mathbb{F})$ are even more rigid than general multiplicative maps: every nonconstant map is necessarily additive. For the fields $\mathbb{R}$ and $\mathbb{C}$, this classification, together with the Jodeit--Lam theorem, was further extended in \cite{GogicTomasevic-LAMA} to injective multiplicative and Jordan multiplicative maps on \emph{structural matrix algebras}, i.e.\ subalgebras of $M_n(\mathbb{F})$ containing all diagonal matrices \cite{vanWyk} (see also \cite[Proposition~3.1]{GogicTomasevic-LAA}). Further details on structural matrix algebras, their automorphisms, and their Jordan monomorphisms can be found in \cite{Coelho, GogicTomasevic-LAA}.
	
	A substantial body of literature also investigates multiplicative maps with respect to products other than the Jordan product, particularly various symmetrized or triple-product variants such as $(a,b) \mapsto aba$, $(a,b,c) \mapsto abc + cba$, and many others (see, e.g., \cite{AnHou, Kuzma, LesnjakSze, Lu2, Molnar2} and references therein). More recently, preservers of the product
	\begin{equation}\label{eq:the product}
		a^{2} \circ b = \frac12(a^{2} b + b a^{2})
	\end{equation}
	were studied by Ghorbanipour and Hejazian~\cite{GhorbanipourHejazian}, who showed that any bijection $\phi : \mathcal{A} \to \mathcal{A}$ preserving \eqref{eq:the product}, where $\mathcal{A}$ is a real standard Jordan operator algebra acting on a Hilbert space of dimension at least $2$, is of the form $\phi = \pm \psi$, where $\psi$ is a Jordan automorphism of $\mathcal{A}$.
	
	The product $(a,b)\mapsto a^2 \circ b$ plays an important role in the theory of JB-algebras. Recall that a \emph{JB-algebra} is a (typically real) Jordan algebra $(\mathcal{A},\circ)$ equipped with a complete submultiplicative norm $\|\cdot\|$  (that is, a Jordan Banach algebra) which additionally satisfies
	$$
	\|a\|^2 \le \|a^2 + b^2\|, \quad \text{ for all }  a,b \in \mathcal{A}.
	$$
	A central feature of JB-algebras is that many aspects of their structure, such as order, spectral theory, and the theory of ideals, are encoded in the associated quadratic operators (see, for example, \cite{Battaglia}). In particular, the fundamental quadratic mapping $U_a : \mathcal{A} \to \mathcal{A}$,
	$$
	U_{a}(b) := 2(a \circ b) \circ a - a^{2} \circ b,
	$$
	plays a key role in the analysis of orthogonality, annihilators, and quadratic ideals. Moreover, the product $(a,b)\mapsto a^2 \circ b$ was used in \cite{Battaglia} to define orthogonality in JB-algebras: elements $a,b \in \mathcal{A}$ are said to be orthogonal if $a^{2} \circ b = 0$. In the setting of special JB-algebras, such as the self-adjoint part of a $C^{*}$-algebra, the Jordan product is given by \eqref{eq:normJprodukt}. In this case, the expression $a^{2} \circ b$ agrees exactly with the symmetrized product appearing in \eqref{eq:the product}.
	
	\smallskip
	
	This motivates our study of \emph{mixed Jordan--power preservers}, namely, maps that preserve a more general Jordan-type product
	$$
	a^k \circ b= \frac{1}{2}(a^k b + b a^k),
	$$
	for arbitrary $k \in \N$. Our main result provides a complete classification of mixed Jordan--power preservers on full matrix algebras.
	\begin{theorem}\label{thm:main}
		Let $\F$ be an algebraically closed field of characteristic different from $2$, and let $n,k \in \N$ with $n \ge 2$. A map $\phi : M_n(\F) \to M_n(\F)$
		satisfies the mixed Jordan--power identity
		\begin{equation}\label{eq:a certain product}
			\phi(A^k \circ B) = \phi(A)^k \circ \phi(B), \qquad \text{for all } A,B \in M_n(\F),
		\end{equation}
		if and only if one of the following holds:
		\begin{itemize}
			\item[(a)] $\phi$ is constant, taking a fixed $(k+1)$-potent value;
			\item[(b)] there exist an invertible matrix $T \in M_n(\F)$, a ring monomorphism $\omega : \F \to \F$, and a $k$-th root of unity $\varepsilon \in \F$ such that
			\begin{equation}\label{eq:form of phi}
				\phi(X) = \varepsilon\, T\, \omega(X)\, T^{-1}
				\quad \text{ or } \quad
				\phi(X) = \varepsilon\, T\, \omega(X)^t\, T^{-1}.
			\end{equation}
		\end{itemize}
		In particular, every nonconstant solution of \eqref{eq:a certain product} is additive.
	\end{theorem}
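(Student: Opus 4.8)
The plan is to prove the nontrivial ("only if") direction, since the converse is a direct check: if $\phi$ is constant with value a $(k+1)$-potent $C$, then $\phi(A)^k\circ\phi(B)=C^k\circ C=C^{k+1}=C$, while for a map of the form \eqref{eq:form of phi} one uses that entrywise application of $\omega$ is a ring monomorphism of $M_n(\F)$, that transposition is an anti-automorphism and hence preserves the (symmetric) Jordan product, and that $\varepsilon^k=1$ forces the scalar $\varepsilon^{k+1}=\varepsilon$ to factor out correctly. For the forward direction I would begin with three substitutions into \eqref{eq:a certain product}. Taking $A=B$ and using $A^k\circ A=A^{k+1}$ yields the \emph{power identity} $\phi(A^{k+1})=\phi(A)^{k+1}$, so $\phi$ carries $(k+1)$-potents to $(k+1)$-potents. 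Taking $A=I$ gives $\phi(I)^k\circ\phi(B)=\phi(B)$ for all $B$; setting further $B=I$ shows $\phi(I)$ is $(k+1)$-potent, and since $\phi(I)^{k+1}=\phi(I)$ one computes $(\phi(I)^k)^2=\phi(I)^{2k}=\phi(I)^k$, so $E:=\phi(I)^k$ is an idempotent. A short Peirce computation shows $E\circ M=M$ forces $M=EME$, so $E\circ\phi(B)=\phi(B)$ means the entire image of $\phi$ is confined to the corner $EM_n(\F)E$.

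The first genuine difficulty is the dichotomy and the extraction of $\varepsilon$. Here I would invoke the structural theory of $(k+1)$-potents developed earlier in the paper (notably the preservation of rank-one $(k+1)$-potents) to show that, unless $\phi$ collapses to a constant map as in case~(a), one must have $E=I$ and in fact $\phi(I)=\varepsilon I$ for a scalar $\varepsilon$. This is the step I expect to be the \emph{main obstacle}: one must rule out that the image is trapped in a proper corner $EM_n(\F)E$ without $\phi$ degenerating, and then show that the "Jordan unit" $\phi(I)$ of the image is genuinely scalar rather than merely a matrix with $\phi(I)^k=I$ (which would not fit \eqref{eq:form of phi}). This is exactly where the intrinsic rigidity of $(k+1)$-potents is indispensable. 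Granting $\phi(I)=\varepsilon I$, the relation $\phi(I)^{k+1}=\phi(I)$ gives $\varepsilon^{k+1}=\varepsilon$, so $\varepsilon$ is a $k$-th root of unity; I then replace $\phi$ by the normalized map $\psi:=\varepsilon^{-1}\phi$, which again satisfies \eqref{eq:a certain product} (because $\varepsilon^{k+1}=\varepsilon$) and now has $\psi(I)=I$.

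With $\psi(I)=I$ the structure simplifies sharply. The substitution $B=I$ collapses \eqref{eq:a certain product} to $\psi(A^k)=\psi(A)^k$, so $\psi$ preserves $k$-th powers; combined with the power identity this forces $\psi$ to send idempotents to idempotents, since for $p^2=p$ one has $\psi(p)=\psi(p)^k$ and $\psi(p)=\psi(p)^{k+1}$, whence $\psi(p)^2=\psi(p)^k\psi(p)=\psi(p)^{k+1}=\psi(p)$. Feeding $\psi(A^k)=\psi(A)^k$ back into \eqref{eq:a certain product} turns the mixed identity into the genuine Jordan identity $\psi(C\circ B)=\psi(C)\circ\psi(B)$ for every $C$ in the image of the $k$-th power map, in particular for all idempotents, all $(k+1)$-potents, and, over an algebraically closed field, all invertible matrices.

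The final task is to promote this partial Jordan multiplicativity to the full identity $\psi(A\circ B)=\psi(A)\circ\psi(B)$ for all $A,B$ by a spanning argument over the $k$-th powers; once this is achieved, \cite[Theorem~1.1]{GogicTomasevic-AM} applies and shows that $\psi$ is either constant or of the form \eqref{eq:glavnieq} for an invertible $T$ and a ring monomorphism $\omega$, so that $\phi=\varepsilon\psi$ has one of the forms in \eqref{eq:form of phi}, and additivity of every nonconstant solution is then automatic from the resulting shape. The delicate point of this last step, and the second place where I expect real work, is the case $\chr(\F)\mid k$: there $(k+1)$-potents need no longer be diagonalizable and not every invertible matrix is a $k$-th power, so the passage from $k$-th powers to all of $M_n(\F)$ must be carried out with care, and the structural results on $(k+1)$-potents must be established in this generality rather than by appeal to diagonalizability.
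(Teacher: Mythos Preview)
Your overall plan is appealing but differs from the paper's and contains a genuine gap at the ``spanning argument''. You obtain $\psi(C\circ B)=\psi(C)\circ\psi(B)$ only for $C$ in the image of the $k$-th power map, and you then propose to pass to all $C$ by spanning. The problem is that this identity is not linear in $C$ on either side: $\psi$ is not yet known to be additive (that is precisely what you are trying to prove), so the fact that $k$-th powers linearly span $M_n(\F)$ is of no use. Nor can you appeal to Zariski density of invertible matrices, since $\psi$ has no regularity. This is not merely a characteristic-$p$ subtlety: even in characteristic $0$ not every matrix is a $k$-th power (nilpotent Jordan blocks are not), so the extension step requires a substantive idea you have not supplied. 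Consequently you cannot invoke \cite[Theorem~1.1]{GogicTomasevic-AM}, which needs the full Jordan identity.

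The paper circumvents this entirely by \emph{never} upgrading to full Jordan multiplicativity. Instead, after extracting the scalar $\varepsilon$ and normalizing (this is done via a detailed analysis of $\phi$ on the $(k+1)$-potents $E_{jj}$ and $E_{ii}+x^kE_{ij}$ in Lemma~\ref{le:h exists}, not via your $\phi(I)$-corner argument), the paper shows directly, using a triple-product identity (Lemma~\ref{le:preserves triple product}), that $\phi(X)=\omega(X)$ entrywise for every $X$ that \emph{is} a $k$-th power. The extension to all of $M_n$ is then achieved not by spanning but by Lemma~\ref{le:iterated R}: the set of $k$-th powers is closed up to all of $M_n$ under iterating the \emph{mixed} operation $(A,B)\mapsto A^k\circ B$, and since both $\phi$ and $\omega$ respect this operation, agreement propagates. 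This is the key device your proposal is missing. Your reduction idea could perhaps be salvaged, but only by essentially reproving the structural content of Lemmas~\ref{le:h exists}--\ref{le:preserves triple product} and replacing the spanning step with something like Lemma~\ref{le:iterated R}; at that point you are no longer deducing the theorem from the $k=1$ case but reproving it.
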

	This result extends all previously known Jordan-type additivity results on $M_n(\F)$, at least for algebraically closed fields with $\operatorname{char}(\F) \neq 2$. The proof of Theorem~\ref{thm:main}, presented in Section~\S\ref{sec:main}, follows the general strategy of \cite[Theorem~1.1]{GogicTomasevic-AM}. The classification relies on the fact that maps satisfying \eqref{eq:a certain product} preserve $(k+1)$-potents along with their ranks and natural partial order, and that they are orthoadditive on these elements. These notions will be introduced and discussed in the next section. A notable difference from the case $k=1$, which corresponds to idempotents, is that when $\operatorname{char}(\F)$ is positive and divides $k$, $(k+1)$-potents may fail to be diagonalizable (see Remark~\ref{rem:k-potents not diagonalizable}). This phenomenon requires more refined arguments in order to handle the mixed Jordan--power preservers effectively. We also note that linear and closely related variants of $k$-potent preservers on matrix and operator algebras have been studied in papers such as \cite{HouHou,SongCao,YouWang}.
	
	\section{Notation and Preliminaries}\label{sec:prel}
	We begin by introducing notation that will be used throughout the paper.  Let $\F$ be a fixed algebraically closed field of characteristic different from $2$.  We denote by $\mathbb{F}^\times$ the multiplicative group of nonzero elements of $\mathbb{F}$, and by $\mathbb{F}[x]$ the polynomial algebra in one variable over $\mathbb{F}$.
	
	\smallskip
	
	Let $n \in \N$ be a fixed positive integer.
	\begin{itemize}
		\item[--] By $[n]$ we denote the set $\{1,\ldots, n\}$ and by $\Delta_n$ the diagonal $\{(j,j) : j \in [n]\}$ of $[n]^2$.
		\item[--] By $M_n=M_n(\mathbb{F})$ we denote the algebra of $n \times n$ matrices over $\F$ and by  $\ca{D}_n$ its subalgebra consisting of all diagonal matrices. 
		\item[-- ]We denote by $I_n$ the identity matrix in $M_n$; when the dimension is clear from the context, we simply write $I$.
		\item[--] The rank of a matrix $X \in M_n$ is denoted by $r(X)$.
		\item[--] For matrices $X,Y \in M_n$, we write $X \propto Y$  to indicate that either  $X = Y = 0$, or they are both nonzero and collinear.
		\item[--] As usual, for $i,j \in [n]$, by $E_{ij}\in M_n$  we denote the standard matrix unit with $1$ at the position $(i,j)$ and $0$ elsewhere. For a matrix $X = [x_{ij}]_{i,j=1}^n \in M_n$ we define its \emph{support} as 
		$$
		\supp X:=\{(i,j)\in [n]^2 : x_{ij}\ne 0\}.
		$$ 
		\item[--] Given a ring endomorphism $\omega$ of $\F$, we use the same symbol $\omega$ to denote the induced ring endomorphism of $M_n$, defined by applying $\omega$ to each entry of the corresponding matrix:  
		$$\omega(X)=[\omega(x_{ij})]_{i,j=1}^n, \quad \text{ for all } X=[x_{ij}]_{i,j=1}^n\in M_n.$$
		\item[--] For $k \in \N$, by $\Sph^1_k$ we denote the set of all $k$-th roots of unity in $\F$.
	\end{itemize}
	
	\medskip
	
	We shall require the following elementary fact from \cite{GogicTomasevic-AM}:
	\begin{lemma}[{\cite[Lemma~2.3]{GogicTomasevic-AM}}]\label{le:contains entire CxC}
		Let $n \ge 2$ and let $\ca{S} \subseteq [n]^2 \setminus \Delta_n$ be a nonempty subset. Suppose that for each $(i,j) \in \ca{S}$ we have:
		\begin{enumerate}[(a)]
			\item $(i,k) \in \ca{S}, \text{ for all } k \in [n]\setminus \{i\}$,
			\item $(\ell,j) \in \ca{S}, \text{ for all } \ell \in [n]\setminus \{j\}$,
			\item $(j,i) \in \ca{S}.$
		\end{enumerate}
		Then $\ca{S} = [n]^2 \setminus \Delta_n$.
	\end{lemma}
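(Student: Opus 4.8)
The plan is to show that a single membership $(i_0,j_0)\in\ca{S}$ with $i_0\neq j_0$ already forces every off-diagonal position into $\ca{S}$, by repeatedly invoking the closure conditions (a) and (c). Since $\ca{S}$ is nonempty, such a pair exists, and I would fix it at the outset.

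First I would apply condition (a) to $(i_0,j_0)$ to conclude that the entire row $i_0$ lies in $\ca{S}$, that is, $(i_0,k)\in\ca{S}$ for every $k\in[n]\setminus\{i_0\}$. Next, for each such $k$ I would apply condition (c) to the pair $(i_0,k)$, obtaining the reflected position $(k,i_0)\in\ca{S}$. Finally, applying condition (a) once more, now to each $(k,i_0)$, yields $(k,m)\in\ca{S}$ for every $m\neq k$; that is, the whole row $k$ belongs to $\ca{S}$. Since $k$ ranges over all of $[n]\setminus\{i_0\}$ and row $i_0$ was already covered in the first step, every row is exhausted off the diagonal, and hence $\ca{S}=[n]^2\setminus\Delta_n$.

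There is no genuine obstacle here: the argument is a short propagation in which (a) spreads membership along a row and (c) transports it across the diagonal so that (a) can be reapplied to a new row. It is worth noting that conditions (a) and (c) alone already suffice for the conclusion; condition (b) is not needed in this shortest proof, as it merely records the column analogue of (a) and reflects the natural symmetry of the hypotheses. One could equally run the mirror-image argument using (b) and (c) to fill columns first. The hypothesis $n\ge 2$ enters only to guarantee that the off-diagonal set, and thus $\ca{S}$, can be nonempty.
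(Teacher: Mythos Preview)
Your proof is correct. The paper itself does not supply a proof of this lemma but merely cites it from \cite{GogicTomasevic-AM}; your three-step propagation argument using (a), then (c), then (a) again is a valid and efficient way to establish the result, and your observation that condition (b) is redundant is accurate.
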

	Let $\mathcal{A}$ be an arbitrary associative $\mathbb{F}$-algebra. Denote by $\idem(\mathcal{A})$ the set of all idempotents of $\mathcal{A}$, equipped with the standard partial order $\le$ defined by
	$$
	p \le q \;\iff\; pq = qp = p, \qquad p, q \in \idem(\mathcal{A}).
	$$
	We shall make occasional use of the following elementary observation from~\cite{GogicTomasevic-LAMA}.
	\begin{lemma}[{\cite[Lemma~2.1]{GogicTomasevic-LAMA}}]\label{le:Jordan product calculations}
		Let $\ca{A}$ be an $\F$-algebra. For $p \in \idem(\ca{A})$ and an arbitrary $a \in \ca{A}$ we have:
		\begin{enumerate}[(a)]
			\item $p \circ a = 0$ if and only if $pa = ap = pap = 0$.
			\item $p \circ a = a$ if and only if $pa = ap = pap=a$.
			% \item $p \perp q$ if and only if $p \circ q = 0$.
			% \item $p \le q$ if and only if $p \circ q = p$.
		\end{enumerate}
	\end{lemma}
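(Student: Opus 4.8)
The plan is to prove each equivalence by pairing the trivial reverse implication with a short left/right multiplication argument for the forward direction, using only the idempotency $p^2 = p$ together with the hypothesis $\chr(\F) \neq 2$. Throughout I would expand the normalized Jordan product explicitly as $p \circ a = \tfrac12(pa + ap)$, so that the stated identities become equations in the associative algebra $\ca{A}$.

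For part (a), the reverse implication is immediate: if $pa = ap = 0$, then $p \circ a = \tfrac12(pa + ap) = 0$, and the condition $pap = 0$ is automatically subsumed since $pap = p(ap) = 0$. For the forward direction I would start from $pa + ap = 0$, i.e.\ $pa = -ap$, and multiply this identity on the left and then on the right by $p$. Left multiplication yields $pa = -pap$ after using $p^2 = p$, while right multiplication yields $ap = -pap$; comparing the two gives $pa = ap$. Combined with $pa = -ap$ this forces $2ap = 0$, and since $\chr(\F) \neq 2$ I conclude $ap = 0$, hence also $pa = 0$ and $pap = 0$.

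For part (b) the strategy is identical. The reverse implication is again immediate, since $pa = ap = a$ gives $p \circ a = \tfrac12(a + a) = a$. For the forward direction I would begin from $pa + ap = 2a$ and once more multiply on the left and on the right by $p$, invoking $p^2 = p$ to obtain $pap = pa$ from left multiplication and $pap = ap$ from right multiplication. Substituting the resulting equalities $pa = ap = pap$ back into $pa + ap = 2a$ produces $2pap = 2a$, whence $pap = a$ after cancelling the factor of $2$, and therefore $pa = ap = pap = a$.

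These computations are entirely elementary, so I do not anticipate a genuine obstacle. The only point demanding care is the use of $\chr(\F) \neq 2$ to cancel the factor $2$ in both forward directions; this is precisely the hypothesis under which the normalized Jordan product $\circ$ faithfully records the two-sided action of the idempotent $p$ on $a$, and dropping it would break both equivalences.
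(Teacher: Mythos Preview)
Your proof is correct. The paper does not supply its own proof of this lemma; it merely cites it as \cite[Lemma~2.1]{GogicTomasevic-LAMA}, so there is no in-paper argument to compare against, but your left/right multiplication approach using $p^2=p$ and $\chr(\F)\ne 2$ is exactly the standard elementary verification one would expect.
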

	
	\smallskip
	
	Fix an integer $k \ge 2$. We say that an element $p \in \ca{A}$ is a \emph{$k$-potent} if $p^k = p$. The set of all $k$-potents in $\ca{A}$ is denoted by $\pot_k(\ca{A})$. Note that for each $p \in \pot_k(\ca{A})$ we have $p^{k-1} \in \idem(\ca{A})$, and obviously
	$$\idem(\ca{A}) = \pot_2(\ca{A}) \subseteq \pot_k(\ca{A}).$$
	For $k$-potents $p,q \in \pot_k(\ca{A})$  we write 
	$$
	p \perp q \, \stackrel{\text{def}}\iff \, pq = qp =0,
	$$
	and in this case we say that $p$ and $q$ are \emph{orthogonal}. Note that for any $1 \le \ell \le k-1$, we have
	\begin{equation}\label{eq: x^2 orthogonal}
		p \perp q \iff p^\ell \perp q, \quad \text{ for all } p,q\in\pot_k(\ca{A}).
	\end{equation}
	Let us now extend the standard partial order $\le$ on $\idem(\ca{A})$  to the set $\pot_k(\ca{A})$. For $k$-potents $p,q \in \pot_k(\ca{A})$ we denote
	$$
	p \preceq q \, \stackrel{\text{def}}\iff \, pq = qp = p^2.
	$$
	It is easy to see that the latter is equivalent to $p^{\ell}q = qp^{\ell} =p^{\ell+1}$, for any $1 \le \ell \le k-1$. A notable case of this is
	\begin{equation}\label{eq:preceq alternative definition}
		p \preceq q \, \iff \, p^{k-1}q = qp^{k-1} = p.
	\end{equation}
	
	\begin{remark}
		Note that $p \preceq q$ implies $p^{k-1} \leq q^{k-1}$ (as idempotents), but the converse fails in general. For example, let $p \in \ca{A}$ be a nonzero $k$-potent and choose some $\omega \in \Sph_{k-1}^1 \setminus \{1\}$ (which exists when $\chr(\F)$ does not divide $k-1$). Then $q=\omega p$ satisfies $p^{k-1} = q^{k-1}$, but $p \not\preceq q$.
		When $\ca{A}$ is unital, note that $p \preceq 1$ if and only if $p$ is an idempotent.
	\end{remark}

	\begin{example}
		Let $P=\diag(p_1,\ldots,p_n)$ and $Q=\diag(q_1,\ldots,q_n)$ be diagonal $k$-potents in $M_n$. Then each diagonal entry of $P$ and $Q$ belongs to $\Sph_{k-1}^{1}\cup\{0\}$. Moreover,
		$$ P \preceq Q \, \iff \text{ for all $j \in [n]$ we have } (p_{j} \ne 0 \implies q_{j} = p_{j}). $$
	\end{example}

	\begin{lemma}\label{le:preceq properties}
		Let $\ca{A}$ be an $\F$-algebra.
		\begin{enumerate}[(a)]
			\item $\preceq$ is a partial order on 
			$\pot_k(\ca{A})$.
			\item If $p_1,\dots,p_m \in \pot_k(\ca{A})$ are mutually orthogonal, then $p_1+\cdots + p_m \in \pot_k(\ca{A})$. If further $q \in \pot_k(\ca{A})$ satisfies $p_1,\ldots,p_m \preceq q$, then $p_1+\cdots + p_m \preceq q$ as well.
			\item For $p,q \in \pot_k(\ca{A})$, we have $p \preceq q  \iff  p^{k-1} \circ q = p.$
			
		\end{enumerate}
	\end{lemma}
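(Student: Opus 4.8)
The plan is to handle the three parts in turn, using throughout the equivalent characterization \eqref{eq:preceq alternative definition}, that $p\preceq q$ iff $p^{k-1}q=qp^{k-1}=p$, together with the fact that $e:=p^{k-1}$ is an idempotent satisfying $ep=pe=p$. For part (a), reflexivity is immediate from $p\cdot p=p^2$. For antisymmetry, I would start from the preceding remark, which gives that $p\preceq q$ and $q\preceq p$ imply $p^{k-1}\le q^{k-1}$ and $q^{k-1}\le p^{k-1}$ in $\idem(\ca{A})$; antisymmetry of the idempotent order then yields $p^{k-1}=q^{k-1}$, and substituting this into the relation $q^{k-1}p=q$ coming from $q\preceq p$ gives $q=p^{k-1}p=p^{k}=p$. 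For transitivity, assuming $p\preceq q$ and $q\preceq r$, I would compute $pr=(p^{k-1}q)r=p^{k-1}(qr)=p^{k-1}q^{2}=p^{2}$ (using $qr=q^{2}$ and then $p^{k-1}q=p$, $pq=p^{2}$), and symmetrically $rp=(rq)p^{k-1}=q^{2}p^{k-1}=p^{2}$; hence $p\preceq r$.

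For part (b), the key observation is that mutual orthogonality $p_ip_j=0$ for $i\ne j$ annihilates every mixed monomial, so that for $s:=p_1+\cdots+p_m$ one has $s^{j}=\sum_i p_i^{\,j}$ for every $j\ge 1$. Taking $j=k$ gives $s^{k}=\sum_i p_i^{\,k}=\sum_i p_i=s$, so $s\in\pot_k(\ca{A})$. If moreover $p_1,\ldots,p_m\preceq q$, then using $p_iq=qp_i=p_i^{2}$ together with the same orthogonality I would obtain $sq=\sum_i p_iq=\sum_i p_i^{2}=s^{2}$ and likewise $qs=s^{2}$, which is exactly $s\preceq q$.

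For part (c), the forward implication is immediate: if $p\preceq q$ then $p^{k-1}q=qp^{k-1}=p$, whence $p^{k-1}\circ q=\tfrac12(p+p)=p$. The reverse implication is the heart of the statement and the step I expect to require the most care. Writing $e=p^{k-1}$ and rephrasing the hypothesis $p^{k-1}\circ q=p$ as $eq+qe=2p$ (where $\chr(\F)\ne 2$ is essential in order to clear the factor $2$), I would sandwich this identity by $e$: multiplying on both sides and using $e^{2}=e$ and $ep=pe=p$ yields $2eqe=2p$, hence $eqe=p$; multiplying the identity only on the left, respectively only on the right, then gives $eq=p$, respectively $qe=p$. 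These are precisely $p^{k-1}q=qp^{k-1}=p$, so $p\preceq q$ by \eqref{eq:preceq alternative definition}. The main obstacle is thus this reverse direction of (c): everything else reduces to bookkeeping with orthogonality and the power identities, whereas here one must recover the two one-sided relations $eq=p$ and $qe=p$ from their symmetric average, which is exactly where the idempotent sandwiching and the hypothesis $\chr(\F)\ne 2$ come into play.
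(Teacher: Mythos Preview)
Your proof is correct and follows essentially the same strategy as the paper: part (b) and the reverse implication in (c) are handled identically (multiplying the relation $eq+qe=2p$ by the idempotent $e=p^{k-1}$ on one or both sides), and your transitivity argument is a slight streamlining of the paper's chain of equalities. The one genuine difference is antisymmetry: the paper argues directly from $p^{2}=pq=qp=q^{2}$ via a parity case split on $k$, whereas you invoke the preceding remark to get $p^{k-1}=q^{k-1}$ in $\idem(\ca{A})$ and then read off $q=q^{k-1}p=p^{k-1}p=p$ from \eqref{eq:preceq alternative definition}; your route is shorter and avoids the case distinction.
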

	\begin{proof}
		\begin{enumerate}[(a)]
			\item Clearly, $\preceq$ is a reflexive relation on $\pot_k(\ca{A})$. We show antisymmetry. Suppose that $p,q \in \pot_k(\ca{A})$ satisfy $p \preceq q$ and $q \preceq p$, i.e.
			$$p^2 = pq = qp = q^2.$$
			Then, if $k$ is even, we have
			$$p = p^k = (p^{2})^{\frac{k}{2}} = (q^{2})^{\frac{k}{2}} = q^k = q.$$
			If $k$ is odd, similarly we obtain
			$$p = p^k = pp^{k-1} = p(p^{2})^{\frac{k-1}2} = p(q^{2})^{\frac{k-1}2} = pq^{k-1} = q,$$
			where the last step uses \eqref{eq:preceq alternative definition} for $q \preceq p$.
			Regarding transitivity, suppose that $p,q,r \in \pot_k(\ca{A})$ satisfy $p \preceq q$ and $q \preceq r$, i.e. 
			$$pq = qp = p^2, \qquad qr = rq = q^2.$$
			Then
			\begin{align*}
				pr &= p^kr = p^{k-2}(p^2)r = p^{k-2}(pq)r = p^{k-1}(qr) = p^{k-1}q^2 \\
				&= p^{k-2}(pq)q = p^{k-2}(p^2)q = p^k q = pq \\
				&= p^2,
			\end{align*}
			where we simply disregard the term $p^{k-2}$ if $k=2$. We similarly show that $rp = p^2$ so $p \preceq r$. 
			\item Since, by orthogonality, the cross terms vanish, we have
			$$(p_1+\cdots + p_m)^k = p_1^k+\cdots+p_m^k =p_1+\cdots+p_m \, \implies \,  p_1+\cdots+p_m \in \pot_k(\ca{A}).$$
			Further, if $p_jq = qp_j = p_j^2$ for all $j \in [m]$, then
			\begin{align*}
				(p_1+\cdots+p_m)q=p_1q+\cdots+p_mq = p_1^2+\cdots+p_m^2 = (p_1+\cdots+p_m)^2
			\end{align*}
			and similarly $$q(p_1+\cdots+p_m) = (p_1+\cdots+p_m)^2,$$ so we conclude $p_1+\cdots+p_m \preceq q$.
			\item If $p \preceq q$, then
			$$p^{k-1} \circ q =\frac12(p^{k-1}q + qp^{k-1}) \stackrel{\eqref{eq:preceq alternative definition}}= \frac12(p+p) = p.$$
			Conversely, suppose that $\frac12(p^{k-1}q + qp^{k-1}) = p$. Multiplying this equality by $2p^{k-1}$ from the left and right, respectively, yields
			$$p^{k-1}q + p^{k-1}qp^{k-1} =2p, \qquad p^{k-1}qp^{k-1} + qp^{k-1} =2p.$$
			It follows $p^{k-1}q = qp^{k-1}$ and therefore
			$$p = \frac12(p^{k-1}q + qp^{k-1}) = p^{k-1}q = qp^{k-1},$$
			which proves the claim by \eqref{eq:preceq alternative definition}.
		\end{enumerate}
	\end{proof}
	
	\begin{remark}\label{re:maximal element}
		Let $\ca{A}$ be a unital $\F$-algebra. Note that a $k$-potent $p \in \pot_k(\ca{A})$ is a maximal element of the poset $(\pot_k(\ca{A}), \preceq)$ if and only if $p$ is invertible in $\ca{A}$. Indeed, suppose that $p$ is invertible and assume that $q \in \pot_k(\ca{A})$ satisfies $p \preceq q$. By multiplying $pq = qp = p^2$ by $p^{-1}$ from either side, it directly follows $p= q$. Therefore $p$ is maximal. Conversely, assume that $p$ is maximal. Since $1-p^{k-1} \in \idem(\ca{A})$ and $p \perp 1-p^{k-1}$, Lemma \ref{le:preceq properties} (b) implies that $q := p + (1-p^{k-1}) \in \pot_k(\ca{A})$. Moreover, we directly obtain $p \preceq q$ so $p = q$ follows from the maximality of $p$. Hence, $p^{k-1} = 1$ so $p$ is invertible, as desired.
	\end{remark}
	
	Given an $\mathbb{F}$-algebra $\mathcal{A}$ and an additive (commutative) semigroup $\mathcal{S}$, we say that a map $\psi : \pot_k(\mathcal{A}) \to \mathcal{S}$ is \emph{orthoadditive} if 
	$$
	p \perp q \, \implies \, \psi(p+q) = \psi(p) + \psi(q), \quad \text{ for all } p,q \in \pot_k(\mathcal{A}).
	$$
	A simple inductive argument combined with Lemma \ref{le:preceq properties} (b), shows that for all $m \in \N$  and $p_1,\ldots,p_m \in \pot_k(\mathcal{A})$ we have
	$$
	p_1,\ldots,p_m \text{ pairwise orthogonal} \, \implies \, \psi(p_1+\cdots+p_m) = \psi(p_1) + \cdots + \psi(p_m).
	$$
	\begin{lemma}\label{le:k-potent diagonalization}
		\phantom{x}
		\begin{enumerate}[(a)]
			\item For any two $k$-potent matrices $P,Q \in \pot_k(M_n)$, $P \preceq Q$ implies $r(P) \le r(Q),$ 
			with equality if and only if $P = Q$.
			\item The matrix rank, $P \mapsto r(P)$, is orthoadditive on $\pot_k(M_n)$.
			\item For each $k$-potent $P \in \pot_k(M_n)$ and $j \in \N$ we have $r(P^j) = r(P)$.
		\end{enumerate}
	\end{lemma}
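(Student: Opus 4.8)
The plan is to attach to each $k$-potent $P$ its associated idempotent $E_P := P^{k-1}$ (which is idempotent, as already observed in the text, since $(P^{k-1})^2 = P^{2k-2} = P^{k-2}P^k = P^{k-1}$) and to reduce all three assertions to standard facts about idempotent matrices. The point of this reduction is that, although a $k$-potent need not be diagonalizable (cf.\ Remark~\ref{rem:k-potents not diagonalizable}), its associated idempotent is a genuine projection, for which ranks, images and kernels behave transparently. The linchpin is part~(c), which says that passing from $P$ to any power leaves the rank unchanged; granting this, one gets $r(P) = r(E_P)$ for every $k$-potent, after which (a) and (b) become bookkeeping with orthogonal projections.

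For~(c) I would argue with the descending chain of column spaces. Since $P^{j+1} = P^j P$, we have $\operatorname{im} P^{j+1} \subseteq \operatorname{im} P^j$, so $\operatorname{im} P \supseteq \operatorname{im} P^2 \supseteq \cdots \supseteq \operatorname{im} P^k$; but $P^k = P$ forces $\operatorname{im} P^k = \operatorname{im} P$, so the entire chain collapses and $\operatorname{im} P^j = \operatorname{im} P$ for $1 \le j \le k$. For $j > k$ the relation $P^k = P$ gives $P^{j} = P^{\,j-k+1}$, which reduces any exponent into the range $[1,k-1]$; hence $\operatorname{im} P^j = \operatorname{im} P$ and $r(P^j) = r(P)$ for all $j \ge 1$. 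In particular $r(E_P) = r(P)$.

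For~(b), suppose $P \perp Q$. Then $P + Q \in \pot_k(M_n)$ by Lemma~\ref{le:preceq properties}(b), and since every mixed product vanishes we obtain $(P+Q)^{k-1} = P^{k-1} + Q^{k-1} = E_P + E_Q$; moreover $E_P$ and $E_Q$ are orthogonal idempotents, because $E_P E_Q = P^{k-2}(PQ)Q^{k-2} = 0$ and symmetrically. For orthogonal idempotents one has $\operatorname{im}(E_P + E_Q) = \operatorname{im} E_P \oplus \operatorname{im} E_Q$ (the sum is direct precisely because $E_P E_Q = E_Q E_P = 0$), so $r(E_P + E_Q) = r(E_P) + r(E_Q)$. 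Applying~(c) three times then yields $r(P+Q) = r\bigl((P+Q)^{k-1}\bigr) = r(E_P) + r(E_Q) = r(P) + r(Q)$.

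For~(a), assume $P \preceq Q$. As noted in the text, this gives $E_P \le E_Q$ as idempotents, i.e.\ $E_P E_Q = E_Q E_P = E_P$; consequently $E_Q - E_P$ is again idempotent with $r(E_Q - E_P) = r(E_Q) - r(E_P)$, so $r(P) = r(E_P) \le r(E_Q) = r(Q)$ by~(c). If equality holds, then $r(E_Q - E_P) = 0$, hence $E_P = E_Q$, that is $P^{k-1} = Q^{k-1}$. The remaining, slightly delicate point, and the one I expect to be the main obstacle, is to upgrade this equality of idempotents to $P = Q$: the rank only ever sees $E_P$, so one must return, through the defining relations of $\preceq$, from the idempotents to the $k$-potents themselves. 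Here feeding $P^{k-1} = Q^{k-1}$ into the identity $P^{k-1}Q = P$ from \eqref{eq:preceq alternative definition} does the job, since $P = P^{k-1}Q = Q^{k-1}Q = Q^k = Q$. It is exactly this passage between a $k$-potent and its associated idempotent that makes the argument work uniformly, regardless of whether the $k$-potents in question are diagonalizable.
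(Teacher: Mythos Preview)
Your argument is correct. Part~(c) is essentially identical to the paper's: both collapse the descending chain $\operatorname{im}P \supseteq \operatorname{im}P^2 \supseteq \cdots \supseteq \operatorname{im}P^k = \operatorname{im}P$. Where you diverge is in~(a) and~(b). The paper works directly with images and kernels of the $k$-potents themselves: for~(a) it establishes the decomposition $\F^n = \ker(P)\dotplus\operatorname{im}(P)$ with $\operatorname{im}(P)=\ker(I-P^{k-1})$, then shows that $P\preceq Q$ forces $\ker(Q)\subseteq\ker(P)$ and $\operatorname{im}(P)\subseteq\operatorname{im}(Q)$, and in the equal-rank case argues that $P$ and $Q$ agree on both summands; for~(b) it shows $\operatorname{im}(P+Q)=\operatorname{im}(P)\dotplus\operatorname{im}(Q)$ directly. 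You instead push everything through the associated idempotent $E_P=P^{k-1}$, reducing~(b) to rank-additivity for orthogonal idempotents and~(a) to the order relation $E_P\le E_Q$, then recover $P=Q$ from $E_P=E_Q$ via \eqref{eq:preceq alternative definition}. Your route is slightly more conceptual---it isolates the principle that all rank information about a $k$-potent lives in its idempotent shadow---while the paper's is more hands-on and self-contained (it does not need to invoke the remark that $P\preceq Q$ implies $E_P\le E_Q$). Both are clean; yours has the small advantage of making explicit why diagonalizability of $P$ is irrelevant.
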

	\begin{proof}
		\begin{enumerate}[(a)]
			\item If $P \in \pot_k(M_n)$, then from the decomposition
			$$x = (I-P^{k-1})x+P^{k-1}x, \quad x \in \F^n$$
			it is easy to read off that 
			$$\F^n = \ker(P) \dotplus \CIm(P) \quad \text{ and } \quad \CIm(P) = \ker(I-P^{k-1}),$$ 
			where $\dotplus$ denotes the direct sum of vector subspaces.
			
			Assume now that $P,Q \in \pot_k(M_n)$ satisfy $P \preceq Q$. By \eqref{eq:preceq alternative definition}, we can rewrite the condition as
			$$P^{k-1}Q = QP^{k-1} = P.$$
			It immediately follows that $\ker(Q) \subseteq \ker(P)$ and $\CIm(P) \subseteq \CIm(Q)$. Either of these inclusions immediately implies $r(P) \le r(Q)$. Now assume that $r(P) = r(Q)$ holds. Then $\ker(Q) = \ker(P)$ and $\CIm(P) = \CIm(Q)$. Further, for each $x \in \CIm(P) = \ker(I-P^{k-1})$ we have
			$$P^{k-1}x = x \, \implies \, Px = QP^{k-1}x = Qx$$
			and hence $P|_{\CIm(P)} = Q|_{\CIm(P)}$. This, along with $0 = P|_{\ker(P)} = Q|_{\ker(P)}$ and $\F^n = \ker(P) \dotplus \CIm(P)$, implies $P = Q$.
			
			\item Suppose that $P,Q \in \pot_k(M_n)$ satisfy $P \perp Q$. From $PQ = QP = 0$ we read out directly that $\CIm(P) \subseteq \ker(Q)$ and $\CIm(Q) \subseteq \ker(P)$. From here we conclude that $\CIm(P)$ and $\CIm(Q)$ intersect trivially. We claim that $\CIm(P+Q) = \CIm(P) \dotplus \CIm(Q)$, from where the equality $r(P+Q) = r(P)+r(Q)$ is immediate. To show the nontrivial inclusion, let $x \in \CIm(P)$ and $y \in \CIm(Q)$. We have $P^{k-1}x = x$ and $Q^{k-1}y = y$, so
			$$(P+Q)^{k-1}(x+y) = (P^{k-1}+Q^{k-1})(x+y) = P^{k-1}x + Q^{k-1}y = x+y$$
			and hence $x+y \in \CIm(P+Q)$, as desired.
			
			\item Since $P^k = P$, without loss of generality we can assume that $1 \le j < k$. Clearly, we have
			$$\CIm(P) \supseteq \CIm(P^2) \supseteq \cdots \supseteq \CIm(P^j) \supseteq \cdots \supseteq \CIm(P^{k}) = \CIm(P),$$
			so the equality follows throughout. In particular, it follows $r(P) = r(P^j)$.
		\end{enumerate}
	\end{proof}
	
	\begin{remark}\label{rem:k-potents not diagonalizable}
		Suppose that $n \ge 2$. All matrices in $\pot_k(M_n)$ are diagonalizable if and only if $\chr(\F)$ does not divide $k-1$. Indeed, suppose that $\chr(\F)$ does not divide $k-1$, and let $P \in \pot_k(M_n)$ be arbitrary. We have
		$$P^{k} = P \, \implies \, P(P^{k-1}-I) = 0,$$
		which implies that the minimal polynomial $m_P \in \F[x]$ of $P$ necessarily divides 
		$$q(x):=x(x^{k-1}-1).$$ 
		Since $\F$ is algebraically closed, to show that $P$ is diagonalizable in $M_n$, it suffices to show that $q$ (and consequently $m_P$) is square-free, meaning that it decomposes into a product of distinct linear factors. We have
		$q'(x) = kx^{k-1}-1$. Assuming that $x_0 \in \F$ satisfies $q(x_0) = q'(x_0) = 0$ would lead to $k-1=0$, which is a contradiction.
		
		Conversely, suppose that $\chr(\F) \mid (k-1)$. For $n=2$, the matrix 
		$$
		A := \begin{bmatrix}
			1  & 1 \\ 0 & 1
		\end{bmatrix} \in M_2
		$$ 
		satisfies $$A^{k-1} = \begin{bmatrix}
			1  & k-1 \\ 0 & 1
		\end{bmatrix} = I \implies A^k = A$$
		so $A$ is a $k$-potent in $M_2$. On the other hand, the minimal polynomial of $A$ is $m_A(x) = (x-1)^2 \in \F[x]$, which is not square-free in $\F[x]$. Therefore, $A$ is not diagonalizable in $M_2$. When $n > 2$, a similar conclusion follows by considering the matrix $\diag(A,I_{n-2}) \in M_n$.
		
		Additionally, the matrix $A$ demonstrates other interesting properties of the order $\preceq$. It is not difficult to show directly that any matrix $B \in M_2$ satisfies $AB=BA=B^2$ if and only if $B \in \{0,A\}$. In particular, $A$ is a minimal element of the poset $(\pot_k(M_2)\setminus\{0\}, \preceq)$, even though its rank is $2$. It is also a consequence that $A$ \emph{cannot} be written in the form $A = P+Q$ for some nonzero mutually orthogonal $k$-potents $P,Q \in \pot_k(M_2)$. On the other hand, by Remark \ref{re:maximal element}, $A$ is also a maximal element of the poset $(\pot_k(M_2), \preceq)$.
	\end{remark}
	
	We have the following generalization of \cite[Proposition~2.2]{GogicTomasevic-LAMA}, but with a more streamlined proof.
	\begin{proposition}\label{p:M_n is Jordans simple}
		Let $k,n \in \N$. Suppose that $\ca{J} \subseteq M_n$ is a nonempty subset which satisfies the following property:
		\begin{equation}\label{eq:nonlinear Jordan ideal}
			(\text{for all } A,B \in M_n) \, ((A \in \ca{J} \text{ or } B \in \ca{J}) \, \implies \, (A^k \circ B \in \ca{J})).
		\end{equation}
		Then either $\ca{J} = \{0\}$ or $\ca{J} = M_n$.
	\end{proposition}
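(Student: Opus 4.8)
The plan is to prove that a nonzero $\ca{J}$ must contain an invertible matrix, and then bootstrap from invertibility to all of $M_n$. Throughout I will use the two halves of \eqref{eq:nonlinear Jordan ideal}: taking $A \in \ca{J}$ gives $A^k \circ B \in \ca{J}$ for every $B$ (the ``left'' form), and taking $B \in \ca{J}$ gives $A^k \circ B \in \ca{J}$ for every $A$ (the ``right'' form). Two facts are immediate. First, $0 \in \ca{J}$, by the left form with $B=0$. Second, since $\F$ is algebraically closed, every \emph{diagonalizable} invertible matrix $V$ is a $k$-th power (take $k$-th roots of its eigenvalues), so the right form shows $\ca{J}$ is closed under each map $X \mapsto V\circ X$ with $V$ diagonalizable and invertible; taking $V=\lambda I$ gives closure under scalars. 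I deliberately keep $V$ diagonalizable: when $\chr(\F)\mid k$ not every invertible matrix is a $k$-th power (unipotents may fail to have $k$-th roots), which is the same positive-characteristic subtlety flagged in Remark~\ref{rem:k-potents not diagonalizable}.

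The finish, once an invertible $G\in\ca{J}$ is found, is short: $G^k$ is invertible, so the left form with $B=(G^k)^{-1}$ gives $G^k\circ (G^k)^{-1}=I\in\ca{J}$, and then the left form with $A=I$ gives $B=I\circ B\in\ca{J}$ for all $B$, i.e.\ $\ca{J}=M_n$. To produce an invertible element I would introduce the invariant $r_*(A):=n-\dim\ker(A^n)$ (the number of nonzero eigenvalues of $A$ with multiplicity) and set $\mu:=\max\{r_*(A):A\in\ca{J}\}$, aiming to show $\mu=n$. That $\mu\ge 1$ follows from a genericity argument: for a fixed nonzero $C\in\ca{J}$ the functional $X\mapsto\Tr(XC)$ is nonzero, hence does not vanish on the Zariski-dense set of diagonalizable invertible matrices, so some such $U$ has $\Tr(U\circ C)=\Tr(UC)\neq 0$, making $U\circ C\in\ca{J}$ non-nilpotent. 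Next, the left form gives $A^m=A^k\circ A^{m-k}\in\ca{J}$ for all $m\ge k$, so choosing $A$ with $r_*(A)=\mu$ and $m$ large produces $P:=A^m\in\ca{J}$ similar to $\diag(G,0)$ with $G$ invertible of size $\mu$ and $\ker P\dotplus\CIm P=\F^n$.

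The crux is a single boosting step: assuming $\mu<n$, I produce $U\circ P\in\ca{J}$ with $r_*(U\circ P)=\mu+1$, contradicting maximality. Working in a basis where $P=\diag(G,0)$ and enlarging the $G$-block by one coordinate, take $U=I+R$ acting only on those $\mu+1$ coordinates, with $R=\left(\begin{smallmatrix}0&\beta\\ \gamma&0\end{smallmatrix}\right)$ for a column $\beta$ and row $\gamma$ chosen so that $\gamma\beta\notin\{0,1\}$ and $\gamma G\beta\neq 0$ (all Zariski-open nonempty conditions, since $G$ is invertible). A direct computation gives $U\circ P$ supported on the $(\mu+1)$-block and equal there to $\left(\begin{smallmatrix}G&\tfrac12 G\beta\\ \tfrac12\gamma G&0\end{smallmatrix}\right)$, whose Schur-complement determinant is $-\tfrac14\det(G)\,\gamma G\beta\neq 0$; hence $r_*(U\circ P)=\mu+1$. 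Because $\gamma\beta\notin\{0,1\}$ and $\chr(\F)\neq 2$, the matrix $R$ (and thus $U$) is diagonalizable with nonzero eigenvalues $1,\,1\pm\sqrt{\gamma\beta}$, so $U$ is diagonalizable invertible and $U\circ P\in\ca{J}$ by the right form; conjugating by the similarity transports this back to the original coordinates.

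The main obstacle is precisely this boosting step, together with the reason the argument cannot be shortened: $\ca{J}$ need not be closed under addition, so although its linear span is readily seen to be a Jordan (hence associative) ideal and therefore all of $M_n$, this tells us nothing about $\ca{J}$ itself. The two delicate points are (i) keeping every auxiliary $U$ diagonalizable, so that it is genuinely a $k$-th power even when $\chr(\F)\mid k$, and (ii) checking that the single $U=I+R$ is simultaneously invertible, diagonalizable, and strictly increases $r_*$; both rely on $\chr(\F)\neq 2$ and on the invertibility of $G$.
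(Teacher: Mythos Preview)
Your argument is correct and takes a genuinely different route from the paper's proof. The paper proceeds by induction on $n$: it first shows that some $E_{ii}$ (and then every $E_{jj}$) lies in $\ca{J}$ via explicit matrix-unit calculations, then uses the inductive hypothesis on the compressions $\ca{J}_p=\{X\in M_p:\diag(X,0)\in\ca{J}\}$ to build $I_n$ by hand, splitting into cases according to the parity of $n$ and choosing concrete diagonalizable matrices whose Jordan products produce $I_n$.

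Your approach avoids both the induction and the parity split. The key new idea is the invariant $r_*(A)=n-\dim\ker(A^n)$ together with a rank-boosting step: from any $P\in\ca{J}$ similar to $\diag(G,0)$ with $G$ invertible of size $\mu<n$, a single Jordan product with a suitably chosen diagonalizable $U=I+R$ produces an element of $\ca{J}$ with $r_*=\mu+1$. The verification that $R$ satisfies $R^3=(\gamma\beta)R$, hence is diagonalizable when $\gamma\beta\neq 0$ and $\chr(\F)\neq 2$, is the crucial check; the Zariski-density step furnishing the base case $\mu\ge 1$ is clean. The endgame (invertible $G\in\ca{J}\Rightarrow I\in\ca{J}\Rightarrow\ca{J}=M_n$) is shared in spirit with the paper, but reached much faster.

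What each approach buys: the paper's proof is entirely elementary and constructive, never invoking Zariski density, and makes explicit contact with the matrix-unit structure used elsewhere in the paper. Your proof is coordinate-free in spirit, more conceptual, and shorter once the boosting lemma is in hand; it also makes transparent exactly where $\chr(\F)\neq 2$ enters (distinctness of $\pm\sqrt{\gamma\beta}$) and why only diagonalizable auxiliaries are used (so they remain $k$-th powers even when $\chr(\F)\mid k$). One small simplification: you do not actually need $U$ invertible, since over an algebraically closed field every diagonalizable matrix is already a $k$-th power; the condition $\gamma\beta\neq 1$ can be dropped.
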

	\begin{proof}
		
		As the field $\F$ is algebraically closed, every element of $\F$ has a $k$-th root and hence $\ca{J}$ is closed under scalar multiplication. Notice first that for any $p  \in [n]$, the set
		$$\ca{J}_{p} := \left\{X \in M_{p} : \begin{bmatrix}
			X & 0 \\ 0 & 0 
		\end{bmatrix} \in \ca{J}\right\} \subseteq M_p$$
		satisfies the same property as $\ca{J}$, namely that for all $A,B \in M_p$ we have
		$$(A \in \ca{J}_p \text{ or } B \in \ca{J}_p) \, \implies \, A^k \circ B \in \ca{J}_p.$$
		Indeed, this follows from
		$$\left(\begin{bmatrix}
			A & 0 \\ 0 & 0 
		\end{bmatrix} \in \ca{J} \text{ or } \begin{bmatrix}
			B & 0 \\ 0 & 0 
		\end{bmatrix} \in \ca{J}\right) \implies \begin{bmatrix}
			A^k \circ B & 0 \\ 0 & 0 
		\end{bmatrix} = \begin{bmatrix}
			A & 0 \\ 0 & 0 
		\end{bmatrix}^k \circ \begin{bmatrix}
			B & 0 \\ 0 & 0 
		\end{bmatrix} \in \ca{J}.$$
		Assuming $\ca{J} \ne \{0\}$, we first show that $\ca{J}_p \ne \{0\}$ for all $p \in [n]$. Indeed, fix a nonzero matrix $X=[x_{ij}]\in \ca{J}$.
		
		Suppose first that $x_{ij} \ne 0$ for some distinct $i,j \in [n]$. We have
		\begin{equation}\label{eq:XijEij + XjiEji}
			\ca{J} \ni (X \circ E_{ii}^k) \circ E_{jj}^k = (X \circ E_{ii}) \circ E_{jj} = \frac14 (x_{ij}E_{ij} + x_{ji}E_{ji}),
		\end{equation}
		so $x_{ij}E_{ij} + x_{ji}E_{ji} \in \ca{J}$. Let 
		$Y = [y_{ij}]\in M_2$ be a fixed $k$-th root of the diagonalizable matrix $$\begin{bmatrix}
			2 & 0 \\ \frac2{x_{ij}} & -2
		\end{bmatrix} \in M_2.$$
		Then
		\begin{align*}
			\ca{J} &\ni (x_{ij}E_{ij} + x_{ji}E_{ji}) \circ (y_{11}E_{ii} + y_{12}E_{ij} + y_{21}E_{ji} + y_{22}E_{jj})^k \\
			&= (x_{ij}E_{ij} + x_{ji}E_{ji}) \circ \left(2\left(E_{ii} + \frac1{x_{ij}}E_{ji} - E_{jj}\right)\right) \\
			&= E_{ii} + E_{jj}.
		\end{align*}
		Finally, we obtain
		$$\ca{J} \ni (E_{ii} + E_{jj}) \circ E_{ii}^k = E_{ii}.$$
		Otherwise, suppose now that $X \in \ca{D}_n$ and fix some $i \in [n]$ such that $x_{ii} \ne 0$. Then
		$$x_{ii}E_{ii} = X \circ E_{ii}^k
		\in \ca{J},$$
		so again $E_{ii} \in \ca{J}$.
		
		Regardless, note that $E_{ii}\in \ca{J}$ for some $i \in [n]$ also implies $E_{jj}\in \ca{J}$ for all $j \in [n]$. Namely, if  $j \ne i$, then $$E_{ji} = (2E_{ji})\circ E_{ii}^k \in \ca{J},$$ so by the first part of the proof for $X:= E_{ji}\in \ca{J}$ we obtain $E_{jj} \in \ca{J}$. In particular, $E_{11} \in \ca{J}$, so $\ca{J}_p$ is nonempty, for each $p \in [n]$.
		
		We prove the proposition by induction on $n$. In the base case of $n=1$, we have $\ca{J} = \ca{J}_{1} = \spn\{E_{11}\} = M_1$. Now fix some $n \ge 2$ and assume that the claim holds true for all $1 \le p < n$. Let $\{0\} \ne \ca{J} \subseteq M_n$ be an arbitrary set satisfying \eqref{eq:nonlinear Jordan ideal}. We claim that $\ca{J} = M_n$. Clearly, it suffices to show that $I_n \in \ca{J}$. By the induction hypothesis, we have that $\ca{J}_p = M_p$ for all $1 \le p < n$. We consider two separate cases.
		
		Assume first that $n$ is even and denote $n = 2m$ for some $m \in \N$. Then $I_m \in \ca{J}_m$ so
		$$\ca{J} \ni \underbrace{\begin{bmatrix}
				I_m & 0 \\ 0 & 0 
			\end{bmatrix}^k}_{\in \ca{J}} \circ \begin{bmatrix}
			0 & 2I_m \\ 0 & 0 
		\end{bmatrix} = \begin{bmatrix}
			0 & I_m \\ 0 & 0 
		\end{bmatrix}.$$
		The matrix $$\begin{bmatrix}
			2I_m & 0 \\ 2I_m & -2I_m 
		\end{bmatrix} \in M_n$$
		has minimal polynomial equal to $(x-2)(x+2)$ and is hence diagonalizable. We can therefore choose its $k$-th root $Y \in M_n$. We have
		$$\ca{J} \ni Y^k \circ \underbrace{\begin{bmatrix}
				0 & I_m \\ 0 & 0 
		\end{bmatrix}}_{\in \ca{J}} = \begin{bmatrix}
			2I_m & 0 \\ 2I_m & -2I_m 
		\end{bmatrix} \circ \begin{bmatrix}
			0 & I_m \\ 0 & 0 
		\end{bmatrix} = \begin{bmatrix}
			I_m & 0 \\ 0 & I_m 
		\end{bmatrix} = I_n,$$
		as desired.
		
		Assume now that $n$ is odd and denote $n = 2m+1$ for some $m \in \N$. Then $I_{m+1} \in \ca{J}_{m+1}$ so
		$$\ca{J} \ni \underbrace{\begin{bmatrix}
				I_m & 0 & 0 \\ 0 & 1 & 0 \\ 0 & 0 & 0 
			\end{bmatrix}^k}_{\in \ca{J}} \circ \begin{bmatrix}
			0 & 0 & 2I_m \\ 0 & 1 & 0 \\ 0 & 0 & 0 
		\end{bmatrix} = \begin{bmatrix}
			0 & 0 & I_m \\ 0 & 1 & 0 \\ 0 & 0 & 0 
		\end{bmatrix}.$$
		The matrix $$\begin{bmatrix}
			2I_m & 0 & 0 \\ 0 & 1 & 0 \\ 2I_m & 0 & -2I_m 
		\end{bmatrix} \in M_n$$
		has minimal polynomial equal to $(x-1)(x-2)(x+2)$ if $\chr(\F) \ne 3$ and $(x-1)(x+1)$ if $\chr(\F) = 3$. In any case it is  diagonalizable and we can therefore choose its $k$-th root $Z \in M_n$. We have
		$$\ca{J} \ni Z^k \circ \underbrace{\begin{bmatrix}
				0 & 0 & I_m \\ 0 & 1 & 0 \\ 0 & 0 & 0 
		\end{bmatrix}}_{\in \ca{J}} = \begin{bmatrix}
			2I_m & 0 & 0 \\ 0 & 1 & 0 \\ 2I_m & 0 & -2I_m 
		\end{bmatrix} \circ \begin{bmatrix}
			0 & 0 & I_m \\ 0 & 1 & 0 \\ 0 & 0 & 0 
		\end{bmatrix} = \begin{bmatrix}
			I_m & 0 & 0 \\ 0 & 1 & 0 \\ 0 & 0 & I_m 
		\end{bmatrix} = I_n,$$
		as desired.
	\end{proof}
	
	Just before the final step in the proof of Theorem~\ref{thm:main}, we show that any nonconstant map 
	$\phi : M_n \to M_n$ satisfying \eqref{eq:a certain product} must take the form \eqref{eq:form of phi} on all matrices $X \in M_n$ that are $k$-th powers.  
	The proof of the theorem is then completed with the following auxiliary result.
	\begin{lemma}\label{le:iterated R} Let $k,n \in \N$. Denote
		$$\ca{R}_0^n := \{A^k : A \in M_n\}$$
		and for $\ell \ge 1$ denote
		$$\ca{R}_{\ell}^n := \{A^k \circ B : A,B \in \ca{R}_{\ell-1}^n\}.$$
		Then $$\ca{R}^n := \bigcup_{\ell \ge 0} \ca{R}_{\ell}^n = M_n.$$
	\end{lemma}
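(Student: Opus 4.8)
The plan is to show that the nested chain $\ca{R}_0^n\subseteq\ca{R}_1^n\subseteq\cdots$ fills out $M_n$; I in fact expect to reach everything already at the first level, $\ca{R}_1^n=M_n$, after which the union is trivially all of $M_n$. First I would record the cheap structural facts. Since $\F$ is algebraically closed, every diagonalizable matrix is a $k$-th power: if $D=U\,\diag(d_1,\dots,d_n)\,U^{-1}$ then $D=\bigl(U\,\diag(d_1^{1/k},\dots,d_n^{1/k})\,U^{-1}\bigr)^k$. Hence $\ca{R}_0^n$ contains all diagonalizable matrices, in particular $I$. The identities $(TAT^{-1})^k=TA^kT^{-1}$ and $T(A^k\circ B)T^{-1}=(TAT^{-1})^k\circ(TBT^{-1})$ show, by induction on $\ell$, that each $\ca{R}_\ell^n$ is invariant under similarity. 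Finally, since $I\in\ca{R}_\ell^n$ for every $\ell$ (inductively, $I=I^k\circ I$), writing $C=I^k\circ C$ gives the nesting $\ca{R}_{\ell-1}^n\subseteq\ca{R}_\ell^n$.

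Because $\ca{R}^n$ is a union of similarity classes, it suffices to place every Jordan canonical form in $\ca{R}^n$. I would then reduce further to single Jordan blocks: if each block $J_m(\lambda)$ can be written as $S\circ B$ with $S,B$ diagonalizable, then a block-diagonal form $J=\bigoplus_i J_{m_i}(\lambda_i)$ satisfies $J=\bigl(\bigoplus_i S_i\bigr)\circ\bigl(\bigoplus_i B_i\bigr)$ with both factors diagonalizable, and choosing a diagonalizable $k$-th root $A$ of $\bigoplus_i B_i$ (again available since $\F$ is algebraically closed) exhibits $J=A^k\circ\bigl(\bigoplus_i S_i\bigr)\in\ca{R}_1^n$. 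So the entire statement reduces to one claim: every Jordan block is a Jordan product of two diagonalizable matrices.

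For $\lambda\ne 0$ this is easy. Choosing $S=\diag(s_1,\dots,s_m)$ with the $s_i$ distinct, nonzero, and no two summing to zero forces $B$, via $(S\circ B)_{ij}=\tfrac12(s_i+s_j)B_{ij}$, to be the upper-triangular matrix with $B_{ii}=\lambda/s_i$ and $B_{i,i+1}=2/(s_i+s_{i+1})$; its diagonal entries $\lambda/s_i$ are distinct, so $B$ is diagonalizable. The real difficulty, and the step I expect to be the main obstacle, is the nilpotent block $J_m(0)$, where a diagonal $S$ forces $B$ to be strictly upper-triangular and hence nilpotent. The idea I would use is to exploit eigenvalue cancellations: choosing $S=\diag(s_1,\dots,s_m)$ so that $s_i+s_j=0$ in some positions $(i,j)$ lying off the superdiagonal (where $J_m(0)$ vanishes) leaves the entries $B_{ij}$ there completely free, as they are annihilated by $S\circ(\cdot)$. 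One can then activate such a free lower-triangular entry—e.g.\ a nonzero corner $B_{m1}$ with $s_m=-s_1$—to make $B$ a cyclic-type matrix whose characteristic polynomial is separable, so that $B$ is diagonalizable while still $S\circ B=J_m(0)$; I have checked this explicitly for $m\le 3$.

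The remaining technical point is to secure separability of this auxiliary polynomial in every characteristic, in particular when $\chr(\F)\mid m$, where the naive choice yields a characteristic polynomial of the form $x^m-c$ with repeated roots. I expect this to be resolved by admitting several eigenvalue pairs $s_i+s_j=0$, which supplies enough free entries to prescribe a separable characteristic polynomial for $B$ compatible with the support constraints. If a clean single-step factorization turns out to be awkward in some positive-characteristic case, the same free-entry mechanism applied iteratively—replacing $J_m(0)$ by a matrix of strictly smaller nilpotency index sitting one level deeper in the chain—still drives the induction through, which is precisely why the result is stated for the union $\ca{R}^n=\bigcup_{\ell\ge0}\ca{R}_\ell^n$ rather than for a fixed level.
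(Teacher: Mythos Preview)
Your setup (closure under similarity, the nesting $\ca{R}_{\ell-1}^n\subseteq\ca{R}_\ell^n$, and the reduction to single Jordan blocks) is correct and matches the paper. Your treatment of $J_m(\lambda)$ with $\lambda\ne 0$ via a diagonal $S$ with distinct nonzero entries and pairwise nonzero sums is valid and in fact more direct than the paper, which first handles $\lambda=1$ by a separate induction and then scales.

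The genuine gap is the nilpotent case $J_m(0)$. Your ``corner'' construction with $s_m=-s_1$ already breaks for $m=2$: there the single cancellation $s_2+s_1=0$ also kills the superdiagonal slot $(1,2)$, so $S\circ B$ can never equal $E_{12}$. For $m\ge 3$ your cyclic $B$ has characteristic polynomial $x^m-c$, which is inseparable precisely when $\chr(\F)\mid m$; you acknowledge this but the remedy (``admit several eigenvalue pairs $s_i+s_j=0$'') is not carried out, and it is not obvious which pattern of free entries yields a separable polynomial while keeping all superdiagonal weights $\tfrac12(s_i+s_{i+1})$ nonzero. More importantly, your iterative fallback (reduce to a nilpotent of smaller index ``one level deeper'') requires knowing that block-diagonal matrices built from elements of $\ca{R}^{p}$ and $\ca{R}^{q}$ lie in $\ca{R}^{p+q}$; you only proved this at level $\ell=1$ under the hypothesis that each block already factors as a Jordan product of diagonalizables, which is exactly what is in question. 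The paper closes this gap by first proving the block-diagonal closure $\diag(\ca{R}^p,\ca{R}^q)\subseteq\ca{R}^{p+q}$ in full generality and then exhibiting an explicit factorization $J_r(0)=A_r\circ B_r^{\,k}$ with $B_r\in\idem(M_r)$ and $A_r$ similar to $\diag(2,J_{r-1}(0))$; this reduces $J_r(0)$ to $J_{r-1}(0)$ uniformly in every characteristic $\ne 2$ and makes the induction go through without any separability considerations. Either supply a concrete diagonalizable $B$ for $J_m(0)$ valid in all characteristics, or add the block-diagonal closure lemma and an explicit one-step reduction $J_r(0)\rightsquigarrow J_{r-1}(0)$ as the paper does.
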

	\begin{proof}
		First note the following:
		\begin{enumerate}[(1)]
			\item The sequence $(\ca{R}_{\ell}^n)_{\ell \ge 0}$ is increasing. Indeed, it easily follows by induction that $I_n \in \ca{R}_{\ell}^n$ for all $\ell \ge 0$ and we have 
			$$
			\ca{R}_{\ell}^n =\{I_n^k \circ B: B \in \ca{R}_{\ell}^n\} \subseteq \ca{R}_{\ell+1}^n.
			$$ As a consequence, if $A,B \in \ca{R}^n$, then $A^k \circ B \in \ca{R}^n$.
			\item We have 
			$$\diag(\ca{R}^p, \ca{R}^q) \subseteq \ca{R}^{p+q}, \quad \text{ for all } p,q \in \N,$$ 
			or spelled out explicitly, if $A \in \ca{R}^p$ and $B \in \ca{R}^q$, then $\diag(A,B) \in \ca{R}^{p+q}$. Indeed, in view of (1), it suffices to show that $\diag(\ca{R}_{\ell}^p, \ca{R}_{\ell}^q) \subseteq \ca{R}_{\ell}^{p+q}$ for all $\ell \ge 0$. We prove this by induction on $\ell$. For $\ell = 0$ this is clear. Assume that the claim holds for some $\ell \ge 0$ and let $A \in \ca{R}_{\ell+1}^p$ and $B \in \ca{R}_{\ell+1}^q$. Then there exist $A_1, A_2\in \ca{R}_{\ell}^p$ such that $A_1^k \circ A_2 = A$, and $B_1, B_2\in \ca{R}_{\ell}^q$ such that $B_1^k \circ B_2 = B$. Then, by the induction hypothesis, we have $\diag(A_1, B_1), \diag(A_2, B_2) \in \ca{R}_\ell^{p+q}$ and 
			$$\ca{R}_{\ell+1}^{p+q} \ni \diag(A_1, B_1)^k \circ \diag(A_2, B_2) = \diag(A_1^k \circ A_2, B_1^k \circ B_2) = \diag(A,B).$$
			\item Next, we claim that 
			$$ 
			S \ca{R}^n S^{-1} \subseteq \ca{R}^n, \quad \text{ for all invertible matrices } S \in M_n.
			$$ 
			Moreover, for all $\ell \ge 0$ and any invertible matrix $S \in M_n$ we in fact have $S\ca{R}_{\ell}^n S^{-1} \subseteq \ca{R}_{\ell}^n$. We again prove this by induction on $\ell$. The claim is clear for $\ell = 0$. Assume that the claim is true for some $\ell \ge 0$ and fix some $A \in \ca{R}_{\ell +1}^n$. Choose some $B,C \in \ca{R}_{\ell}^n$ such that $B^k \circ C = A$. By the inductive hypothesis, we have $SBS^{-1}, SCS^{-1} \in \ca{R}_{\ell}^n$ and 
			$$\ca{R}_{\ell+1}^n \ni (SBS^{-1})^k \circ (SCS^{-1}) = S(B^k \circ C)S^{-1} = SAS^{-1}.$$
		\end{enumerate}
		In view of (3), as the underlying field $\F$ is algebraically closed, to prove the lemma it suffices to establish that $\ca{R}^n$ contains all Jordan matrices. In addition, by invoking (2), since every Jordan matrix is block-diagonal having Jordan blocks $$J_r(\lambda) := \begin{bmatrix} \lambda & 1 & \cdots & 0 & 0 \\
			0 & \lambda & \cdots & 0 & 0\\
			\vdots & \vdots & \ddots & \vdots & \vdots \\
			0 & 0 & \cdots & \lambda & 1 \\
			0 & 0 & \cdots & 0 & \lambda\end{bmatrix} \in M_r, \quad \lambda \in \F$$
		on the diagonal, it suffices to show that $J_r(\lambda) \in \ca{R}^r$ for all $\lambda \in \F, r \in \N$. 
		
		First we focus on the case $\lambda = 0$. By induction on $r$ we prove that $J_r(0) \in \ca{R}^r$. For $r = 1$, we have $J_1(0) = 0 \in \ca{R}_0^1$. Let $r \ge 2$ and assume that $J_{j}(0) \in \ca{R}^{j}$ for all $j \in [r-1]$. Define $A_r, B_r \in M_r$ as
		\begin{align*}
			A_r &:= 
			\begin{bmatrix}
				2 & 0 & -1 & 0 & 0 & \cdots & 0 & 0 \\
				0 & 0 & 1 & 0 & 0 & \cdots & 0 & 0\\
				0 & 0 & 0 & 1 & 0 & \cdots & 0 & 0\\
				0 & 0 & 0 & 0 & 1 & \cdots & 0 & 0\\
				0 & 0 & 0 & 0 & 0 & \cdots & 0 & 0\\
				\vdots & \vdots & \vdots & \vdots & \vdots & \ddots & \vdots & \vdots\\ 
				0 & 0 & 0 & 0 & 0 & \cdots & 0 & 1 \\
				0 & 0 & 0 & 0 & 0 & \cdots & 0 & 0 \\
			\end{bmatrix} = 2E_{11}-E_{13}+\sum_{2 \le j \le r-1} E_{j(j+1)},\\
			B_r &:= \begin{bmatrix}
				0 & 1 & 0 & 0 & 0 & \cdots & 0 & 0 \\
				0 & 1 & 0 & 0 & 0 & \cdots & 0 & 0\\
				0 & 0 & 1 & 0 & 0 & \cdots & 0 & 0\\
				0 & 0 & 0 & 1 & 0 & \cdots & 0 & 0\\
				\vdots & \vdots & \vdots & \vdots & \vdots & \ddots & \vdots & \vdots\\ 
				0 & 0 & 0 & 0 & 0 & \cdots & 1 & 0 \\
				0 & 0 & 0 & 0 & 0 & \cdots & 0 & 1 \\
			\end{bmatrix} = I_r-E_{11}+E_{12}.
		\end{align*}
		(for small $r$, the matrices shown are truncated to their upper left $r\times r$ block, and all indices $\ge r$ in the formulas are disregarded).
		
		One easily checks that the Jordan form of $A_r$ is  $$\diag(J_1(2), J_{r-1}(0)) \in M_r$$
		which is in $\ca{R}^r$ by the induction hypothesis. Therefore, $A_r \in \ca{R}^r$ as well. On the other hand, we have $B_r \in \idem(M_r) \subseteq \ca{R}_0^r$.
		Finally, it is straightforward to compute that
		$$J_r(0) = A_r \circ B_r = A_r \circ B_r^k \in \ca{R}^r,$$
		which completes the inductive step.
		
		Now we focus on the case $\lambda = 1$. The same inductive approach as in the previous case can be carried out by considering the matrices $C_r,D_r \in M_r$ given by
		\begin{align*}
			C_r &:= 
			\begin{bmatrix}
				0 & 2 & 0 & 0 & 0 & \cdots & 0 & 0 \\
				0 & 0 & 2 & 0 & 0 & \cdots & 0 & 0\\
				0 & 0 & 1 & 1 & 0 & \cdots & 0 & 0\\
				0 & 0 & 0 & 1 & 1 & \cdots & 0 & 0\\
				0 & 0 & 0 & 0 & 1 & \cdots & 0 & 0\\
				\vdots & \vdots & \vdots & \vdots & \vdots & \ddots & \vdots & \vdots\\ 
				0 & 0 & 0 & 0 & 0 & \cdots & 1 & 1 \\
				0 & 0 & 0 & 0 & 0 & \cdots & 0 & 1 \\
			\end{bmatrix} = J_r(1)+ E_{12}+E_{23} -E_{11}-E_{22},\\
			D_r &:= \begin{bmatrix}
				1 & 0 & 0 & 0 & 0 & \cdots & 0 & 0 \\
				1 & 0 & 0 & 0 & 0 & \cdots & 0 & 0\\
				0 & 0 & 1 & 0 & 0 & \cdots & 0 & 0\\
				0 & 0 & 0 & 1 & 0 & \cdots & 0 & 0\\
				\vdots & \vdots & \vdots & \vdots & \vdots & \ddots & \vdots & \vdots\\ 
				0 & 0 & 0 & 0 & 0 & \cdots & 1 & 0 \\
				0 & 0 & 0 & 0 & 0 & \cdots & 0 & 1 \\
			\end{bmatrix} = I_r-E_{22}+E_{21}.
		\end{align*}
		
		A direct computation shows that the Jordan form of $C_r$ is  $$\diag(J_2(0), J_{r-2}(1)) \in M_r$$
		which is in $\ca{R}^r$ by the induction hypothesis. Therefore, $C_r \in \ca{R}^r$ as well. On the other hand, we have $D_r \in \idem(M_r) \subseteq \ca{R}_0^r$.
		Finally, it is straightforward to compute that
		$$J_r(1) = C_r \circ D_r = C_r \circ D_r^k \in \ca{R}^r,$$
		which completes the inductive step.
		
		For general $\lambda \in \F^\times$, let $\mu\in\F$ be a $k$-th root of $\lambda$. Then $\mu I_r \in \ca{R}_0^r$, and we have
		$$\ca{R}^r \ni (\mu I_r)^k \circ J_r(1) = \lambda J_r(1).$$
		Since $\lambda J_r(1)$ is similar to $J_r(\lambda)$, by (3) it follows $J_r(\lambda) \in \ca{R}^r$ as well.
	\end{proof}

	\section{Proof of Theorem~\ref{thm:main}}\label{sec:main}
	Fix $m,n \in \mathbb{N}$ throughout this section. Our proof of Theorem~$\ref{thm:main}$ follows the strategy of \cite[Section~\S3]{GogicTomasevic-AM}, with the necessary adjustments. While some arguments could be abbreviated, we include the details for completeness. We begin by establishing several auxiliary lemmas, the first of which is an immediate consequence of Proposition~$\ref{p:M_n is Jordans simple}$.

	\begin{lemma}\label{le:zero map}
		Let $\mathcal{A}$ be an arbitrary $\mathbb{F}$-algebra, and let $\phi: M_n \to \mathcal{A}$ be a map satisfying \eqref{eq:a certain product}. If there exists a nonzero matrix $X \in M_n$ such that $\phi(X) = 0$, then $\phi$ is identically zero.
	\end{lemma}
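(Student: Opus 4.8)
The plan is to apply Proposition~\ref{p:M_n is Jordans simple} to the preimage of $0$ under $\phi$. Concretely, I would set
$$
\ca{J} := \{Y \in M_n : \phi(Y) = 0\}.
$$
By hypothesis $X \in \ca{J}$ with $X \ne 0$, so $\ca{J}$ is nonempty and $\ca{J} \ne \{0\}$. It therefore suffices to verify that $\ca{J}$ satisfies the absorption property \eqref{eq:nonlinear Jordan ideal}: once that is checked, the dichotomy in Proposition~\ref{p:M_n is Jordans simple} leaves only the possibility $\ca{J} = M_n$, which is precisely the assertion that $\phi$ is identically zero.

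The verification is a one-line computation from the functional equation \eqref{eq:a certain product}, using that $0$ annihilates both operations in play: in any associative $\mathbb{F}$-algebra $\ca{A}$ one has $0^k = 0$ (since $k \ge 1$) and $0 \circ c = c \circ 0 = 0$ for every $c \in \ca{A}$. Thus if $A \in \ca{J}$, then
$$
\phi(A^k \circ B) = \phi(A)^k \circ \phi(B) = 0^k \circ \phi(B) = 0,
$$
while if instead $B \in \ca{J}$, then
$$
\phi(A^k \circ B) = \phi(A)^k \circ \phi(B) = \phi(A)^k \circ 0 = 0.
$$
In either case $A^k \circ B \in \ca{J}$, which is exactly condition \eqref{eq:nonlinear Jordan ideal}.

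I do not expect any genuine obstacle here: the argument is purely formal, it uses nothing about the codomain $\ca{A}$ beyond its being an associative $\mathbb{F}$-algebra, and it needs only $k \ge 1$ (so that $0^k = 0$). All of the real content is already absorbed into Proposition~\ref{p:M_n is Jordans simple}, whose proof established the ``Jordan simplicity'' of $M_n$ with respect to the product $(A,B) \mapsto A^k \circ B$; the present lemma is simply the observation that $\ker \phi$ is one of the absorbing sets to which that proposition applies.
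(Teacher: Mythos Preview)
Your proposal is correct and follows essentially the same approach as the paper: define $\ca{J}=\phi^{-1}(0)$, verify the absorption property \eqref{eq:nonlinear Jordan ideal} directly from \eqref{eq:a certain product}, and invoke Proposition~\ref{p:M_n is Jordans simple} to conclude $\ca{J}=M_n$. The paper phrases the two cases as ``$A\in\ca{J}$ implies both $A^k\circ B\in\ca{J}$ and $A\circ B^k\in\ca{J}$'', while you split into ``$A\in\ca{J}$'' and ``$B\in\ca{J}$'', but this is only a cosmetic difference.
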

	\begin{proof}
		Let 
		$$
		\mathcal{J} := \phi^{-1}(0) = \{X \in M_n : \phi(X) = 0\}.
		$$ 
		It is clear that $\mathcal{J}$ satisfies the hypotheses of Proposition~\ref{p:M_n is Jordans simple}. Indeed, suppose that $A \in \ca{J}$ and $B \in M_n$. Then
		$$
		\phi(A^k \circ B) = {\underbrace{\phi(A)}_{=0}}^k \circ \phi(B) = 0, \qquad \phi(A \circ B^k) = {\underbrace{\phi(A)}_{=0}} \circ  \phi(B)^k = 0
		$$
		so $A^k \circ B, A\circ B^k \in \ca{J}$. Moreover, as $0 \neq X \in \mathcal{J}$ we conclude that $\mathcal{J} = M_n$, which implies that $\phi$ is identically zero.  
	\end{proof}
	The following lemma is a $(k+1)$-potent variant of \cite[Lemma~3.2]{GogicTomasevic-AM}.
	
	\begin{lemma}\label{le:basic properties II}
		Let $\phi: M_n \to M_m$ be an arbitrary map satisfying \eqref{eq:a certain product}. Then the following statements hold:
		\begin{enumerate}
			\item[(a)] $\phi$ preserves $(k+1)$-th powers, i.e. 
			$$\phi(X^{k+1}) = \phi(X)^{k+1}, \quad \text{ for all }X \in M_n.$$
			In particular, $\phi$  preserves $(k+1)$-potents, that is $\phi(\pot_{k+1}(M_n))\subseteq \pot_{k+1}(M_m)$.
			\item[(b)] For $P,Q \in \pot_{k+1}(M_n)$, we have 
			$$P \preceq Q \, \implies \, \phi(P) \preceq \phi(Q).$$
		\end{enumerate}
		Suppose now that $\phi(0) = 0$ but $\phi \ne 0$. Then:
		\begin{enumerate}
			\item[(c)] For $P,Q \in \pot_{k+1}(M_n)$ we have $P \perp Q \implies \phi(P) \perp \phi(Q)$.
			\item[(d)] For each nonzero $P \in \pot_{k+1}(M_n)$ we have $r(\phi(P)) \ge r(P)$. Consequently, $m \ge n$.
		\end{enumerate}
		Further, if $m=n$, then:
		\begin{enumerate}
			\item[(e)] For each $P \in \pot_{k+1}(M_n)$ we have $r(\phi(P)) = r(P)$.
			\item[(f)] The restriction $\phi|_{\pot_{k+1}(M_n)}: \pot_{k+1}(M_n)\to \pot_{k+1}(M_n)$ is orthoadditive.
			%, i.e.\
			%$$P \perp Q \implies \phi(P+Q) = \phi(P)+\phi(Q), \quad  \text{for all } P,Q \in \pot_{k+1}%(M_n).$$
			\item[(g)] Suppose that $P_1,\ldots,P_r \in \pot_{k+1}(M_n)$ are mutually orthogonal and let $\lambda_1,\ldots,\lambda_r \in \F$. Then
			$$\phi\left(\sum_{j=1}^r \lambda_j P_j\right) = \sum_{j=1}^r \phi(\lambda_j P_j).$$
		\end{enumerate}
	\end{lemma}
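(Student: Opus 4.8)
The plan is to establish the seven statements in order, reading each hypothesis through the defining identity \eqref{eq:a certain product}. For (a), observe that $X^k\circ X=\tfrac12(X^kX+XX^k)=X^{k+1}$, so setting $A=B=X$ gives $\phi(X^{k+1})=\phi(X)^{k+1}$; specializing to $X\in\pot_{k+1}(M_n)$ yields $\phi(X)=\phi(X)^{k+1}$. For (b), I would use Lemma~\ref{le:preceq properties}(c) applied with $k+1$ in place of $k$, i.e. $P\preceq Q\iff P^k\circ Q=P$; applying $\phi$ to $P=P^k\circ Q$ gives $\phi(P)=\phi(P)^k\circ\phi(Q)$, which is exactly $\phi(P)\preceq\phi(Q)$. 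For (c), orthogonality $P\perp Q$ forces $P^kQ=QP^k=0$, hence $P^k\circ Q=0$; applying $\phi$ and using $\phi(0)=0$ gives $\phi(P)^k\circ\phi(Q)=0$, and since $\phi(P)^k$ is idempotent, Lemma~\ref{le:Jordan product calculations}(a) turns this into $\phi(P)^k\perp\phi(Q)$, which \eqref{eq: x^2 orthogonal} (with $\ell=k$, in $\pot_{k+1}$) upgrades to $\phi(P)\perp\phi(Q)$.

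The heart of the matter is (d), where the difficulty flagged in Remark~\ref{rem:k-potents not diagonalizable} appears: a $(k+1)$-potent $P$ of rank $r$ need not decompose into $r$ orthogonal rank-one $(k+1)$-potents, so the naive decomposition argument breaks down. I would bypass this by comparing $P$ with the idempotent $f:=P^k$, which has $r(f)=r(P)$ by Lemma~\ref{le:k-potent diagonalization}(c). From $P^{k+1}=P$ and $f^k=f$ one reads off $f^k\circ P=P$ and $P^k\circ f=f$; feeding these into \eqref{eq:a certain product} with $(A,B)=(f,P)$ and $(A,B)=(P,f)$ respectively gives $\phi(P)=\phi(f)^k\circ\phi(P)$ and $\phi(f)=\phi(P)^k\circ\phi(f)$. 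By Lemma~\ref{le:Jordan product calculations}(b) these say $\CIm\phi(P)\subseteq\CIm\phi(f)$ and $\CIm\phi(f)\subseteq\CIm\phi(P)$, so $r(\phi(P))=r(\phi(f))$. Since $f$ is an honest idempotent it does split, $f=g_1+\cdots+g_r$ into orthogonal rank-one idempotents with $g_i\preceq f$, and then parts (b), (c) together with the orthoadditivity of rank (Lemma~\ref{le:k-potent diagonalization}(a),(b)) force $r(\phi(f))\ge r$. Hence $r(\phi(P))\ge r(P)$, and taking $P=I$ (with $\phi(I)\ne0$ by Lemma~\ref{le:zero map}) yields $m\ge n$. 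This reduction $r(\phi(P))=r(\phi(P^k))$ is the step I expect to be the main obstacle; it is precisely what renders the non-diagonalizable $(k+1)$-potents harmless.

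Assuming $m=n$, the remaining parts are rank bookkeeping. For (e) I would pair $P$ with the complementary idempotent $e:=I-P^k$, so that $P\perp e$; then (c) and orthoadditivity give $r(\phi(P))+r(\phi(e))=r(\phi(P)+\phi(e))\le n$, while (d) supplies $r(\phi(P))\ge r(P)$ and $r(\phi(e))\ge n-r(P)$, forcing equality throughout and hence $r(\phi(P))=r(P)$ (the invertible case $e=0$ being automatic). For (f), note $P,Q\preceq P+Q$, so (b) and Lemma~\ref{le:preceq properties}(b) give $\phi(P)+\phi(Q)\preceq\phi(P+Q)$; by (e) and orthoadditivity both sides have rank $r(P)+r(Q)$, so the equality case of Lemma~\ref{le:k-potent diagonalization}(a) yields $\phi(P+Q)=\phi(P)+\phi(Q)$.

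Finally for (g), write $X:=\sum_j\lambda_jP_j$. Orthogonality gives $P_j^k\circ X=\lambda_jP_j$, so \eqref{eq:a certain product} with $(A,B)=(P_j,X)$ produces the key identity $\phi(\lambda_jP_j)=\phi(P_j)^k\circ\phi(X)$; summing over $j$ gives $\sum_j\phi(\lambda_jP_j)=g\circ\phi(X)$, where $g:=\sum_j\phi(P_j)^k$ is an idempotent (the summands being orthogonal by (c)). It then remains to show $g\circ\phi(X)=\phi(X)$. Here the idempotent $f:=\sum_jP_j^k$ satisfies $f\circ X=X$, so $\phi(X)=\phi(f)^k\circ\phi(X)$; moreover the orthoadditivity (f) gives $\phi(f)=\sum_j\phi(P_j^k)$, and the two-sided image/kernel identities from the proof of (d) (which yield $\CIm\phi(P_j^k)=\CIm\phi(P_j)$ and the matching kernel statement) identify $g$ with $\phi(f)^k$. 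Thus $g\circ\phi(X)=\phi(f)^k\circ\phi(X)=\phi(X)$ by Lemma~\ref{le:Jordan product calculations}(b), giving $\phi(X)=\sum_j\phi(\lambda_jP_j)$ and completing the list.
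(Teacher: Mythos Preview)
Your argument is correct, and parts (a)--(c), (e), (f) are essentially identical to the paper's. In (d) both you and the paper make the same reduction to the idempotent $f=P^k$ and decompose it into rank-one orthogonal idempotents; the only variation is that you prove the two-sided equality $r(\phi(P))=r(\phi(f))$ via both image inclusions, while the paper extracts just the one inequality it needs from $\phi(P)^k\circ\phi(P^k)=\phi(P^k)$. One small omission in your (d): you should invoke Lemma~\ref{le:zero map} explicitly to guarantee each $\phi(g_i)\neq 0$, since without that the rank count $\sum_i r(\phi(g_i))\ge r$ does not follow (you cite this lemma for $\phi(I)$ but not for the $g_i$).

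The one place where your route genuinely diverges is (g). The paper proceeds in a single chain using only (c) and (f):
\[
\phi(X)=\phi\Bigl(X\circ\Bigl(\textstyle\sum_\ell P_\ell\Bigr)^k\Bigr)=\phi(X)\circ\phi\Bigl(\textstyle\sum_\ell P_\ell\Bigr)^k\stackrel{(f)}{=}\phi(X)\circ\textstyle\sum_\ell\phi(P_\ell)^k=\sum_\ell\phi(X\circ P_\ell^k)=\sum_\ell\phi(\lambda_\ell P_\ell).
\]
Your approach instead first obtains $\sum_j\phi(\lambda_jP_j)=g\circ\phi(X)$ with $g=\sum_j\phi(P_j)^k$, and then must identify $g$ with $\phi(f)^k$; this forces you back into the image/kernel analysis from (d) to justify $\phi(P_j^k)^k=\phi(P_j)^k$. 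It works, but the paper's direct chain is shorter and avoids that detour entirely. What your version buys is an explicit intermediate identity $\phi(P_j^k)^k=\phi(P_j)^k$, which is of independent interest but not needed elsewhere in the paper.
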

	\begin{proof}
		\begin{enumerate}[(a)]
			\item For $X \in M_n$ we have
			$$\phi(X^{k+1}) = \phi(X^k \circ X) = \phi(X)^k \circ \phi(X) = \phi(X)^{k+1}.$$
			\item If $P \preceq Q$, then by Lemma \ref{le:preceq properties} (c) we have $$\phi(P) = \phi(P^k \circ Q) = \phi(P)^k \circ \phi(Q).$$
			By the same lemma, the latter is equivalent to  $\phi(P) \preceq \phi(Q)$.
			
			\item We have
			$$\phi(P)^k \circ \phi(Q) = \phi(P^k \circ Q) = \phi(0) = 0.$$
			As $\phi(P)^k \in \idem(M_m)$, by Lemma \ref{le:Jordan product calculations} (a) and \eqref{eq: x^2 orthogonal}, we have $\phi(P) \perp \phi(Q)$.
			\item Let $P \in \pot_{k+1}(M_n)$ be an arbitrary $(k+1)$-potent of rank $r \ge 1$. If $\chr(\F)$ does not divide $k$, then $P$ is diagonalizable (by Remark \ref{rem:k-potents not diagonalizable}) and we can proceed exactly as in the proof of \cite[Lemma~3.2~(d)]{GogicTomasevic-AM}. We here provide an argument which is independent of $\chr(\F)$. Since $P^k$ is an idempotent in $M_n$, it is therefore diagonalizable. Hence, there exist mutually orthogonal rank-one idempotents $Q_1, \ldots, Q_r \in \idem(M_n)$ such that 
			$$P^k=Q_1+\cdots + Q_r.$$  
			Since $\phi\neq 0$, Lemma~\ref{le:zero map} guarantees that $\phi$ does not annihilate any nonzero matrix. In particular, by (a), each $\phi(Q_{j})$ is a nonzero $(k+1)$-potent. It follows
			$$
			Q_{1},\ldots,Q_{r} \le P^{k}
			\stackrel{(b)}{\implies}
			\underbrace{\phi(Q_{1}),\ldots,\phi(Q_{r})}_{\text{mutually orthogonal by (c)}} \preceq \, \, \phi(P^{k}).
			$$
			Lemma~\ref{le:preceq properties} (b) then yields
			$$
			\phi(Q_{1})+\cdots+\phi(Q_{r}) \preceq \phi(P^{k}).
			$$
			By Lemma \ref{le:k-potent diagonalization} we conclude $r(\phi(P^k)) \ge r$.
			Notice now that
			$$
			\phi(P)^{k} \circ \phi(P^{k}) = \phi(P^{k} \circ P^{k})  = \phi(P^{2k}) = \phi(P^{k}).
			$$
			Since $\phi(P)^{k}$ is an idempotent, by Lemma~\ref{le:Jordan product calculations} (b) and Lemma \ref{le:k-potent diagonalization} (c) we conclude
			$$
			\phi(P^{k})\,\phi(P)^{k} = \phi(P^{k}) \, \implies \, r \le r(\phi(P^{k})) \le r(\phi(P)^{k}) = r(\phi(P)).
			$$
			In particular, by considering $\phi(I_n)$, it follows $m \ge n$.
			\item Let $P\in \pot_{k+1}(M_n)$ be an arbitrary $(k+1)$-potent. Then $P^{k} \in \idem(M_n)$ so $I-P^k \in \idem(M_n)$ as well, which is also orthogonal to $P$. By (c) we have $\phi(P) \perp \phi(I-P^k)$ and, since $r(P) = r(P^k)$ (by Lemma~\ref{le:k-potent diagonalization} (c)), we have
			$$n = r(P) + r(I-P^k) \stackrel{(d)}\le  r(\phi(P)) + r(\phi(I-P^k)) \le n,$$
			where the last inequality follows from Lemma \ref{le:k-potent diagonalization} (b). We conclude $r(\phi(P)) = r(P)$.
			
			\item Since $P\perp Q$, by Lemma \ref{le:preceq properties} we have that $P+Q$ is again a $(k+1)$-potent and $P,Q \preceq P+Q$. Statements (b) and (c) imply $$\underbrace{\phi(P), \phi(Q)}_{\text{orthogonal}} \preceq \phi(P+Q) \stackrel{\text{Lemma }\ref{le:preceq properties} (b)}\implies\phi(P)+\phi(Q) \preceq \phi(P+Q).$$
			Finally, by the orthoadditivity of rank on $\pot_{k+1}(M_n)$ (Lemma~\ref{le:k-potent diagonalization}~(b)) we have
			\begin{align*}
				r(\phi(P) + \phi(Q)) &= r(\phi(P)) + r(\phi(Q)) \stackrel{(e)}= r(P) + r(Q) = r(P+Q) \\
				&\leftstackrel{(e)}= r(\phi(P+Q)),
			\end{align*}
			so the equality follows by Lemma~\ref{le:k-potent diagonalization}~(a).
			\item Similarly as in \cite[Lemma~3.2~(h)]{GogicTomasevic-AM} or  \cite[Lemma~3.4~(g)]{GogicTomasevic-LAMA} we have
			\begin{align*}
				\phi\left(\sum_{j=1}^r \lambda_j P_j\right) &= \phi\left(\left(\sum_{j=1}^r \lambda_j P_j\right) \circ \left(\sum_{\ell=1}^r  P_{\ell}\right)^k\right) = \phi\left(\sum_{j=1}^r \lambda_j P_j\right)  \circ \left(\phi\left(\sum_{\ell=1}^r P_{\ell}\right)\right)^k\\
				&\leftstackrel{(f)}= \phi\left(\sum_{j=1}^r \lambda_j P_j\right)  \circ \left(\sum_{\ell=1}^r \phi(P_{\ell})\right)^k \stackrel{(c)}= \phi\left(\sum_{j=1}^r \lambda_j P_j\right)  \circ \left(\sum_{\ell=1}^r \phi(P_{\ell})^k\right) \\
				&= \sum_{\ell=1}^r \left(\phi\left(\sum_{j=1}^r \lambda_j P_j\right)  \circ \phi(P_{\ell})^k\right)= \sum_{\ell=1}^r \phi\left(\left(\sum_{j=1}^r \lambda_j P_j\right)  \circ P_{\ell}^k\right) \\
				&= \sum_{\ell=1}^r \phi(\lambda_{\ell} P_{\ell}).
			\end{align*}
		\end{enumerate}
	\end{proof}
	
	In the sequel, $\K$ will denote the \emph{prime subfield} of $\F$, i.e.\ $\K$ is generated by the multiplicative identity of $\F$ (see e.g.\ \cite{DummitFoote}). Obviously $\K \cong \Q$ if $\chr(\F)=0$, or $\K \cong \Z/p\Z$ if $p=\chr(\F)>0$.
	
	\begin{lemma}\label{le:h exists}
		Let $\phi : M_n \to M_n$ be a nonzero map which satisfies \eqref{eq:a certain product} and $\phi(0) = 0$. There exists a unique multiplicative map $\omega : \F \to \F$ such that 
		\begin{equation}\label{eq:homogeneity}
			\phi(\lambda X) = \omega(\lambda) \phi(X), \quad  \text{ for all } \lambda \in \F \text{ and } X \in M_n.
		\end{equation}
		Moreover, there exists an invertible matrix $T \in M_n$ and $\varepsilon\in\Sph^1_k$ such that the map $$\psi:= T^{-1}\phi(\cdot)T$$ satisfies $$\psi(E_{jj})  = \varepsilon E_{jj}, \quad \text{ for each } j \in [n].$$
		Further, fix some distinct $i,j \in [n]$. Then $\psi(E_{ij})$ is a nonzero scalar multiple of $E_{ij}$ or $E_{ji}$. If $\psi(E_{ij}) = \beta_{ij} E_{ij}$ for some $\beta_{ij} \in \F^\times$, then for each $x,y \in \F$ we have
		\begin{equation}\label{eq: Eii+Eij 1}
			\psi(E_{ii} + x E_{ij}) = \varepsilon E_{ii} +  \omega(x)  (\beta_{ij} E_{ij}), \qquad \psi(E_{jj} + y E_{ij}) = 
			\varepsilon E_{jj} + \omega(y)   (\beta_{ij} E_{ij}).
		\end{equation}
		If, on the other hand, $\psi(E_{ij}) = \gamma_{ij} E_{ji}$ for some $\gamma_{ij} \in \F^\times$, then
		\begin{equation}\label{eq: Eii+Eij 2}
			\psi(E_{ii} + x E_{ij}) = \varepsilon E_{ii} + \omega(x) (\gamma_{ij} E_{ji}), \qquad \psi(E_{jj} + y E_{ij}) = 
			\varepsilon E_{jj} + \omega(y)  (\gamma_{ij} E_{ji}).
		\end{equation}
		Finally, if $n \ge 2$, the map $\omega : \F \to \F$ is a ring monomorphism. In particular, $\phi$ is $\K$-homogeneous.
	\end{lemma}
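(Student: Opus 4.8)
The plan is to pass to a conjugate $\psi$ of $\phi$ that is diagonal on the $E_{jj}$, and then to read off the structure of $\psi$ on matrix units, building $\omega$ and $\varepsilon$ along the way. Throughout I may assume (via Lemma~\ref{le:zero map}) that $\phi$ annihilates no nonzero matrix, and since $\phi:M_n\to M_n$ all of parts (a)--(g) of Lemma~\ref{le:basic properties II} are at my disposal. \emph{Step 1 (diagonalization).} Writing $I=\sum_j E_{jj}$ with the $E_{jj}$ mutually orthogonal rank-one $(k+1)$-potents, orthoadditivity (Lemma~\ref{le:basic properties II}(f)) gives $\phi(I)=\sum_j\phi(E_{jj})$, where by (c) and (e) the $\phi(E_{jj})$ are mutually orthogonal rank-one $(k+1)$-potents. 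Every rank-one $(k+1)$-potent equals $\varepsilon_j Q_j$ for a rank-one idempotent $Q_j$ and some $\varepsilon_j\in\Sph^1_k$. Since $r(\phi(I))=r(I)=n$ by (e), $\phi(I)$ is invertible, so $\phi(I)^k=I$; comparing with $\phi(I)^k=\sum_j Q_j$ shows the $Q_j$ form a complete orthogonal family of rank-one idempotents. Simultaneously diagonalizing them yields an invertible $T$ with $T^{-1}Q_jT=E_{jj}$, and setting $\psi:=T^{-1}\phi(\cdot)T$ (which again satisfies \eqref{eq:a certain product}, conjugation being an algebra automorphism) gives $\psi(E_{jj})=\varepsilon_j E_{jj}$.

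\emph{Step 2 (matrix units and homogeneity).} Fix distinct $i,j$. Evaluating $\psi$ on $E_{\ell\ell}^k\circ E_{ij}=0$ for $\ell\neq i,j$ shows $E_{\ell\ell}\circ\psi(E_{ij})=0$, confining $\psi(E_{ij})$ to the $\{i,j\}$-block; comparing $E_{ii}\circ\psi(E_{ij})$ with $E_{jj}\circ\psi(E_{ij})$ kills the diagonal part, leaving $\psi(E_{ij})=bE_{ij}+cE_{ji}$. Since $\psi(E_{ij})^{k+1}=\psi(E_{ij}^{k+1})=\psi(0)=0$ while the square of $bE_{ij}+cE_{ji}$ is $bc(E_{ii}+E_{jj})$, nilpotency forces $bc=0$; as $\psi(E_{ij})\neq0$, exactly one coefficient survives, yielding the two cases $\psi(E_{ij})=\beta_{ij}E_{ij}$ or $\psi(E_{ij})=\gamma_{ij}E_{ji}$. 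For homogeneity, the same localization applied to $\mu E_{jj}$ gives $\psi(\mu E_{jj})=g_j(\mu)E_{jj}$ with $g_j(\mu^k\nu)=g_j(\mu)^k g_j(\nu)$, and evaluating $\psi\big((\mu E_{ii})^k\circ E_{ij}\big)$ in two ways (through $E_{ii}$ and through $E_{jj}$) forces $g_i(\mu)^k=g_j(\mu)^k$ for all $i,j$. Hence $\psi(\mu I)=\diag(g_1(\mu),\dots,g_n(\mu))$ has scalar $k$-th power, so for $\lambda=\mu^k$ (a $k$-th root existing as $\F$ is algebraically closed) I obtain $\psi(\lambda X)=\psi(\mu I)^k\circ\psi(X)=\omega(\lambda)\psi(X)$ with $\omega(\lambda):=g_1(\mu)^k$. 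Injectivity of $\psi$ on nonzero matrices makes $\omega$ well defined and unique, multiplicative with $\omega(1)=1$, and the identity \eqref{eq:homogeneity} transfers verbatim to $\phi$.

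\emph{Step 3 (the $E_{ii}+xE_{ij}$ formula, additivity of $\omega$, equal $\varepsilon$).} Here $E_{ii}+xE_{ij}$ is a rank-one idempotent, so the orthogonality argument confines $\psi(E_{ii}+xE_{ij})$ to the $\{i,j\}$-block, and (in the case $\psi(E_{ij})=\beta_{ij}E_{ij}$) the relations with $E_{ii},E_{jj}$ reduce it to $a(x)E_{ii}+b(x)E_{ij}$ with $a(x)\in\Sph^1_k$. The central device is the identity $(E_{ii}+xE_{ij})^k\circ(E_{ii}+yE_{ij})=E_{ii}+\tfrac{x+y}2E_{ij}$: applying $\psi$, using the explicit $k$-th power of a rank-one $(k+1)$-potent, and matching the two entries shows \emph{simultaneously} that $a$ is constant (equal to $\varepsilon_i$) and that $b$ obeys Jensen's equation, hence is additive with $b(x)=2\omega(x/2)\beta_{ij}$. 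Additivity of $b$ then gives $\omega(u+v)=\omega(u)+\omega(v)$ outright, so $\omega$ is both additive and multiplicative fixing $1$, i.e. a ring homomorphism, and injective because $\F$ is a field --- this is exactly the claimed ring monomorphism, and it upgrades $b(x)=2\omega(x/2)\beta_{ij}$ to $b(x)=\omega(x)\beta_{ij}$, establishing \eqref{eq: Eii+Eij 1} (with \eqref{eq: Eii+Eij 2} entirely symmetric). Finally, evaluating $(E_{ii}+xE_{ij})^k\circ(E_{jj}+yE_{ij})=\tfrac{x+y}2E_{ij}$ through $\psi$ and comparing coefficients yields $\varepsilon_i^{-1}\varepsilon_j\,\omega(x)=\omega(x)$, hence $\varepsilon_i=\varepsilon_j$; as every index pair is joined by some $E_{ij}$, all the $\varepsilon_j$ collapse to a single $\varepsilon\in\Sph^1_k$. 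The $\K$-homogeneity of $\phi$ is then immediate, since a ring homomorphism fixes the prime subfield.

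I expect the genuine difficulty to lie in Step~3, and it is precisely the feature distinguishing $k\ge2$ from the idempotent case $k=1$: because $(k+1)$-potents are roots-of-unity multiples of idempotents, one must carry the scalar $a(x)\in\Sph^1_k$ through every computation and then prove both its constancy and the coincidence of the $\varepsilon_j$. The cleanest route I see is the two-parameter identity above, which extracts additivity of $\omega$ at a single stroke; the delicate bookkeeping is the normalization factor $\tfrac12$ in the Jordan product, which produces the intermediate form $b(x)=2\omega(x/2)\beta_{ij}$ and is only resolved after additivity of $\omega$ is in hand. Secondary care is needed in Step~2 for the well-definedness of $\omega$ (independence of the chosen $k$-th root), which rests on $\psi$ being injective on nonzero matrices.
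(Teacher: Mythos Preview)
Your proposal is correct and follows the same overall architecture as the paper --- diagonalize the images of the $E_{jj}$, localize to $2\times 2$ blocks, and read off the structure from idempotent identities --- but several of the specific computations are organized differently. Two points of genuine contrast are worth noting. First, you define $\omega$ globally in one stroke, via the observation that $\psi(\mu I)^k$ is a scalar matrix (since all $g_j(\mu)^k$ coincide); the paper instead introduces separate maps $\omega_j$, shows $\omega_i(\cdot)^k=\omega_j(\cdot)^k$, and then uses the auxiliary relation $\omega_j(\lambda^k)=\omega_j(\lambda)^k$ to conclude $\omega_i=\omega_j$. Your route is shorter here. Second, for additivity of $\omega$ and the constancy of the diagonal coefficient $a(x)$ you use the ``same-corner'' identity $(E_{ii}+xE_{ij})^k\circ(E_{ii}+yE_{ij})=E_{ii}+\tfrac{x+y}{2}E_{ij}$, whereas the paper uses the ``opposite-corner'' identity $(E_{ii}+xE_{ij})\circ(E_{jj}+yE_{ij})^k=\tfrac{x+y}{2}E_{ij}$ for additivity and the separate relation $(E_{ii}+x^kE_{ij})^k\circ(E_{ii}+E_{jj})$ to pin down $\varepsilon_i=\varepsilon_j$. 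Your identity is elegant in that it delivers Jensen's equation for $b$ and constancy of $a$ simultaneously, at the cost of postponing $\varepsilon_i=\varepsilon_j$ to an additional computation.

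A few minor imprecisions to tighten: what you call ``injectivity of $\psi$ on nonzero matrices'' is really the nonvanishing $\psi(X)\neq 0$ for $X\neq 0$ (from Lemma~\ref{le:zero map}), which is exactly what the well-definedness argument needs; the identification $b(x)=2\omega(x/2)\beta_{ij}$ does not come from Jensen but from the relation $E_{jj}\circ\psi(E_{ii}+xE_{ij})=\psi(\tfrac{x}{2}E_{ij})$ together with the homogeneity already in hand; and your final step $\varepsilon_i=\varepsilon_j$ presupposes the companion formula $\psi(E_{jj}+yE_{ij})=\varepsilon_jE_{jj}+\omega(y)\beta_{ij}E_{ij}$, which must be established by the symmetric argument (not by \eqref{eq: Eii+Eij 2}, which is the transposed case). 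None of these affects correctness.
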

	\begin{proof}
		In view of Lemma \ref{le:basic properties II} (c) and (e), $\phi(E_{11}), \ldots ,\phi(E_{nn})$ are mutually orthogonal rank-one $(k+1)$-potents and therefore can be simultaneously diagonalized (see e.g.\ \cite[Theorem~8 of \S6.5]{HoffmanKunze}). Hence, by passing to map $T^{-1}\phi(\cdot)T$, for a suitable invertible matrix $T \in M_n$, without loss of generality we can assume that 
		\begin{equation}\label{eq:phi fiksira dijagonalne}
			\phi(E_{jj})  = \varepsilon_j E_{jj}, \quad \text{ for some $\varepsilon_j \in \Sph^1_k$ for each } j \in [n].
		\end{equation}
		Note that by Lemma \ref{le:basic properties II} (f), for each diagonal $(k+1)$-potent $D \in \ca{D}_n$ we have
		\begin{equation}\label{eq:diagonal tripotents}
			\phi(D) = 
			\sum_{(j,j) \in \supp D} \varepsilon_j E_{jj}.
		\end{equation}
		By Lemma \ref{le:zero map}, we have 
		\begin{equation}\label{eq:lambda times ejj}
			\phi(\lambda  E_{jj}) \ne 0, \quad\text{ for all } j \in [n] \text{ and } \lambda \in \F^\times.
		\end{equation} Further, for each $X \in M_n$ and $S \subseteq [n]$ we have
		\begin{equation}\label{eq:preserves support}
			\supp X \subseteq S \times S \implies \supp \phi(X) \subseteq S \times S.
		\end{equation}
		Indeed, denote the diagonal idempotent 
		$P := \sum_{j \in [n]\setminus S} E_{jj}$ and note that a matrix $X\in M_n$ is supported in $S \times S$ if and only if $XP=PX=0$. In that case, obviously  $X \circ P^k = 0$, so 
		$$0 = \phi(X \circ P^k) = \phi(X) \circ \phi(P)^k \stackrel{\eqref{eq:diagonal tripotents}}= \phi(X) \circ P$$
		and hence Lemma \ref{le:Jordan product calculations} (a) implies the claim.
		\smallskip
		
		Let $j \in [n]$ and $\lambda \in \F^\times$. Then
		$$\phi(\lambda E_{jj}) = \phi((\lambda E_{jj}) \circ E_{jj}^k) = \phi(\lambda E_{jj}) \circ \phi(E_{jj})^k \stackrel{\eqref{eq:phi fiksira dijagonalne}}= \phi(\lambda E_{jj}) \circ E_{jj}.$$
		In view of Lemma \ref{le:Jordan product calculations} (b) we have $$\phi(\lambda E_{jj}) = E_{jj}\phi(\lambda E_{jj})E_{jj} = \phi(\lambda E_{jj})_{jj} E_{jj} \stackrel{\eqref{eq:phi fiksira dijagonalne}}= (\varepsilon_j^{-1}\phi(\lambda E_{jj})_{jj})  \phi(E_{jj}) .$$
		Since $\phi(0) = 0$, it follows that there exists a unique map $\omega_j : \F \to \F$ such that \begin{equation}\label{eq:definition of omegaj}
			\phi(\lambda E_{jj}) = \omega_j(\lambda)\phi(E_{jj}), \quad \text{ for all } \lambda \in \F.
		\end{equation}
		Further, by \eqref{eq:lambda times ejj} clearly $\omega_j^{-1}(\{0\}) = \{0\}$. Fix distinct $i,j \in [n]$. For $\lambda \in \F^\times$ by \eqref{eq:phi fiksira dijagonalne} we have
		\begin{align}\label{eq:1/2 lambda racun}
			\omega_i(\lambda)^k \phi(E_{ij}) \circ E_{ii} &= \phi(E_{ij}) \circ \phi(\lambda E_{ii})^k = \phi(E_{ij} \circ (\lambda E_{ii})^k) \\
			&= \phi\left(\frac12\lambda^k E_{ij}\right) = \phi(E_{ij} \circ (\lambda E_{jj})^k) \nonumber \\
			&= \phi(E_{ij}) \circ \phi(\lambda E_{jj})^k = \omega_j(\lambda)^k \phi(E_{ij}) \circ E_{jj}. \nonumber
		\end{align}
		Note that \eqref{eq:preserves support} implies that $\supp \phi( E_{ij}) \subseteq \{i,j\}\times\{i,j\}$ so we can denote
		$$\phi (E_{ij}) = \sum_{(r,s) \in \{i,j\} \times \{i,j\}}\alpha_{rs} E_{rs}$$
		for some $\alpha_{rs} \in \F$. We have
		\begin{align*}
			&\omega_i(\lambda)^k\left(\alpha_{ii}E_{ii} + \frac12 \alpha_{ij}E_{ij} + \frac12 \alpha_{ji}E_{ji}\right) = \omega_i(\lambda)^k \phi(E_{ij}) \circ E_{ii} = \omega_j(\lambda)^k \phi(E_{ij}) \circ E_{jj} \\
			&\qquad= \omega_j(\lambda)^k\left(\alpha_{jj}E_{jj} + \frac12 \alpha_{ij}E_{ij} + \frac12 \alpha_{ji}E_{ji}\right).
		\end{align*}
		Hence, as $\omega_i(\lambda), \omega_j(\lambda) \ne 0$, it follows $\alpha_{ii} = \alpha_{jj} = 0$. Further, note that
		$$0 = \phi(0) = \phi(E_{ij}^{k+1}) = \phi(E_{ij} \circ E_{ij}^k) = \phi(E_{ij}) \circ \phi(E_{ij})^k = \phi(E_{ij})^{k+1},$$
		so that $\phi(E_{ij})$ is a nonzero nilpotent (Lemma \ref{le:zero map}). Therefore, we have
		$$
		\phi(E_{ij}) \propto E_{ij} \quad \text{ or } \quad \phi(E_{ij}) \propto E_{ji}.
		$$
		It also follows that $\omega_i(\lambda)^k = \omega_j(\lambda)^k$ for all $\lambda \in \F^\times$, whence we conclude $\omega_i(\cdot)^k = \omega_j(\cdot)^k$. Returning back to the equality \eqref{eq:1/2 lambda racun}, we now obtain
		\begin{equation}\label{eq:scalar times Eij}
			\phi\left(\frac12\lambda^k E_{ij}\right) = \frac12 \omega_i(\lambda)^k \phi(E_{ij})
		\end{equation}
		for all $\lambda\in\F^\times$.
		Without loss of generality assume that $$\phi(E_{ij}) = \beta_{ij} E_{ij}, \quad\text{ for some } \beta_{ij} \in \F^\times$$ as in the other case we can pass to the map $\phi(\cdot)^t$.  We now proceed with the proof of equalities \eqref{eq: Eii+Eij 1} together with the required properties of $\omega$. Fix some $x \in \F^\times$. By \eqref{eq:preserves support} we see that
		$$\supp \phi(E_{ii} + x^k E_{ij}) \subseteq \{i,j\} \times \{i,j\}.$$
		Denote $$\phi (E_{ii} + x^k E_{ij}) = \sum_{(r,s) \in \{i,j\} \times \{i,j\}}\gamma_{rs} E_{rs}$$
		for some $\gamma_{rs} \in \F$. A similar calculation as before gives
		\begin{align*}
			\frac12\omega_i\left(x\right)^k (\beta_{ij} E_{ij}) \,\,&\leftstackrel{\eqref{eq:scalar times Eij}}= \phi\left(\frac12 x^k E_{ij}\right) = \phi((E_{ii} + x^k E_{ij}) \circ E_{jj}^k)\stackrel{\eqref{eq:phi fiksira dijagonalne}}= \phi(E_{ii} + x^k E_{ij}) \circ E_{jj} \\
			&= 
			\frac12 \gamma_{ij}E_{ij} + \frac12 \gamma_{ji}E_{ji} + \gamma_{jj} E_{jj}.
		\end{align*}
		First we conclude $\gamma_{ji} = \gamma_{jj} = 0$ and $\gamma_{ij} =
		\omega_i\left(x\right)^k \beta \ne 0$. Since $\phi(E_{ii} + x^k E_{ij})$ is a $(k+1)$-potent (Lemma \ref{le:basic properties II} (a)), we also have
		\begin{align*}
			\gamma_{ii} E_{ii} + \gamma_{ij} E_{ij} &= \phi (E_{ii} + x^k E_{ij}) = \phi (E_{ii} + x^k E_{ij})^{k+1} = (\gamma_{ii} E_{ii} + \gamma_{ij} E_{ij})^{k+1}\\
			&= \gamma_{ii}^{k+1} E_{ii} + \gamma_{ii}^k \gamma_{ij} E_{ij}.
		\end{align*}
		By comparing the coefficients, it follows that $\gamma_{ii} \in \Sph^1_{k}$. In addition,
		\begin{align*}
			\gamma_{ii}E_{ii} + \gamma_{ij}E_{ij} &= \phi(E_{ii}+x^kE_{ij}) = \phi((E_{ii}+x^k E_{ij})^k \circ (E_{ii} + E_{jj})) \\
			&\leftstackrel{\eqref{eq:diagonal tripotents}}= (\gamma_{ii}E_{ii} + \gamma_{ij}E_{ij})^k \circ (\varepsilon_i E_{ii} + \varepsilon_j E_{jj})\\
			&= (\gamma_{ii}^kE_{ii} + \gamma_{ii}^{k-1}\gamma_{ij}E_{ij}) \circ (\varepsilon_i E_{ii} + \varepsilon_j E_{jj})\\
			&= (E_{ii} + \gamma_{ii}^{-1}\gamma_{ij}E_{ij}) \circ (\varepsilon_i E_{ii} + \varepsilon_j E_{jj})\\
			&= \varepsilon_i E_{ii} + \gamma_{ii}^{-1}\gamma_{ij}\left(\frac{\varepsilon_i + \varepsilon_j}2\right)E_{ij}.
		\end{align*}
		A direct comparison implies $\varepsilon_i = \varepsilon_j = \gamma_{ii}$, so
		\begin{equation}\label{eq:partial idempotent with x}
			\phi(E_{ii}+x^kE_{ij}) = \varepsilon_i E_{ii} + \omega_i(x)^k(\beta_{ij} E_{ij}).
		\end{equation}
		Since $i,j$ were arbitrary, denote 
		$$
		\varepsilon := \varepsilon_1 = \cdots = \varepsilon_n.
		$$
		Further, for $\lambda \in \F^\times$ we have
		\begin{align*}
			\omega_i(\lambda^k)(\varepsilon E_{ii}) \quad\, &\leftstackrel{\eqref{eq:definition of omegaj}, \eqref{eq:phi fiksira dijagonalne}}= \phi(\lambda^k E_{ii}) = \phi((\lambda E_{ii})^k \circ E_{ii}) = \phi(\lambda E_{ii})^k \circ \phi(E_{ii})\\
			&\leftstackrel{\eqref{eq:definition of omegaj}, \eqref{eq:phi fiksira dijagonalne}}= \omega_i(\lambda)^k (\varepsilon E_{ii}),
		\end{align*}
		implying $\omega_i(\lambda^k) = \omega_i(\lambda)^k$. As an analogous relation holds for $j$, from $\omega_i(\cdot)^k = \omega_j(\cdot)^k$ and the surjectivity of the map $\lambda \mapsto \lambda^k$ on $\F$, it follows that
		$$\omega_i(\cdot) =  \omega_j(\cdot), \quad \text{ for all }i,j \in [n].$$
		We conclude that there exists a unique globally defined map 
		$$
		\omega = \omega_1 = \cdots = \omega_n: \F \to \F
		$$ such that
		\begin{equation}\label{eq:semihomogeneous}
			\phi(\lambda E_{jj})  = \omega(\lambda)\phi(E_{jj}) = \omega(\lambda)(\varepsilon E_{jj}), \quad \text{ for all } \lambda \in \F \text{ and } j \in [n].
		\end{equation}
		Fix again some $i \in [n]$. For $\lambda, \mu \in \F$, using the above equality we have
		\begin{align*}
			\omega(\lambda^k\mu)\phi(E_{ii}) &= \phi((\lambda^k\mu) E_{ii}) = \phi((\lambda E_{ii})^k \circ (\mu E_{ii})) = \phi(\lambda E_{ii})^k \circ \phi(\mu E_{ii})\\
			&\leftstackrel{\eqref{eq:phi fiksira dijagonalne}}= \omega(\lambda)^k\omega(\mu)\phi(E_{ii}),
		\end{align*}
		which implies $\omega(\lambda^k\mu) = \omega(\lambda)^k\omega(\mu)$. Since $\omega$ commutes with $k$-th powers, the surjectivity of the $k$-th power map on $\F$ allows us to conclude that $\omega$ is indeed multiplicative.
		
		Now we prove \eqref {eq:homogeneity}. For $\lambda \in \F$ we have \begin{align*}
			\phi(\lambda I)&= \phi\left(\sum_{j \in [n]} \lambda E_{jj} \right)\stackrel{\text{Lemma } \ref{le:basic properties II} \text{ (g)}}= \sum_{j \in [n]} \phi(\lambda E_{jj})  \stackrel{\eqref{eq:semihomogeneous}}=\sum_{j \in [n]}\omega(\lambda) \phi(E_{jj})\\
			&\leftstackrel{\text{Lemma } \ref{le:basic properties II} \text{ (f)}}= \omega(\lambda) \phi(I).
		\end{align*}
		Now, for arbitrary $X \in M_n$ and $\lambda \in \F$ we have
		\begin{align*}
			\phi(\lambda^k X) &= \phi(X \circ (\lambda I)^k) = \phi(X) \circ \phi(\lambda I)^k \\
			&= \omega(\lambda)^k (\phi(X) \circ \phi(I)^k) = \omega(\lambda^k) (\phi(X) \circ \phi(I)^k) \\
			&= \omega(\lambda^k) \phi(X).
		\end{align*}
		We conclude \eqref{eq:homogeneity} again by invoking the surjectivity of the $k$-th power on $\F$. Overall, from \eqref{eq:partial idempotent with x} we read off that
		$$\phi(E_{ii} + x E_{ij}) = \varepsilon E_{ii} +  \omega\left(x\right) (\beta_{ij} E_{ij}), \quad \text{ for all }x \in \F \text{ and distinct }i,j \in [n].$$
		In an analogous way we arrive at the equality
		$$\phi(E_{jj} + y E_{ij}) = 
		\varepsilon E_{jj} +  \omega\left( y\right)  (\beta_{ij} E_{ij}), \quad \text{ for all }y \in \F \text{ and distinct }i,j \in [n].$$
		Finally, assume that $n \ge 2$ and fix some distinct $i,j \in [n]$. For all $x,y \in \F$ we have
		\begin{align*}
			\omega\left(\frac{x+y}2\right) \phi(E_{ij}) \, &\leftstackrel{\eqref{eq:homogeneity}}= \phi\left(\frac{x+y}2 E_{ij}\right) \\
			&= \phi((E_{ii} + x E_{ij}) \circ (E_{jj} + y E_{ij})^k) \\
			&= \phi(E_{ii} + x E_{ij}) \circ \phi(E_{jj} + y E_{ij})^k \\
			&= \left(\varepsilon E_{ii} + \omega\left(x\right)  (\beta_{ij} E_{ij})\right)  \circ \left(\varepsilon E_{jj} +  \omega\left( y\right) (\beta_{ij} E_{ij})\right)^k\\
			&= \left(\varepsilon E_{ii} + \omega\left(x\right)  (\beta_{ij} E_{ij})\right)  \circ \left( E_{jj} +  \varepsilon^{-1}\omega\left(y\right) (\beta_{ij} E_{ij})\right)\\
			&= \frac12\left(\omega\left( x\right) + \omega\left( y\right)\right) (\beta_{ij} E_{ij})\\
			&= \frac12\left(\omega\left( x\right) + \omega\left( y\right)\right)\phi(E_{ij})
		\end{align*}
		and hence
		\begin{equation}\label{eq:1/2 multiplicative}
			\omega\left(\frac{x+y}2\right) =  \frac12\left(\omega\left( x\right) + \omega\left( y\right)\right).
		\end{equation}
		Plugging in $y = 0$ yields $\omega\left(\frac12 x\right) = \frac12 \omega(x)$ for all $x \in \F$. Returning back to \eqref{eq:1/2 multiplicative} we directly obtain the additivity of $\omega$. This finally closes the proof of the lemma.
	\end{proof}
	
	In the sequel, we will continue to use the map $\omega : \F \to \F$ from \eqref{eq:homogeneity} without further reference to Lemma~\ref{le:h exists}. 
	
	\begin{lemma}\label{le:preserves triple product}
		Let $\phi : M_n \to M_n, n \ge 2$, be a map which satisfies \eqref{eq:a certain product} such that $\phi(0) = 0$ and $\phi(I) = I$. Then:
		\begin{itemize}
			\item $\phi$ preserves $k$-th powers, i.e.\ $\phi(X^k) = \phi(X)^k$ for all $X \in M_n$,
			\item $\phi(\idem(M_n)) \subseteq \idem(M_n)$,
			\item $\phi(PX^k P) = \phi(P)\phi(X)^k\phi(P)$ for all $X \in M_n$ and $P \in \idem(M_n)$.
		\end{itemize}
	\end{lemma}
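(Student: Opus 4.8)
The first two assertions are quick. For the first, set $B = I$ in \eqref{eq:a certain product}: since $X^k \circ I = X^k$ and $\phi(I) = I$, we obtain $\phi(X^k) = \phi(X)^k \circ I = \phi(X)^k$. For the second, let $P \in \idem(M_n)$. As $P^j = P$ for every $j \ge 1$, $P$ is in particular a $(k+1)$-potent, so Lemma~\ref{le:basic properties II}(a) gives $\phi(P)^{k+1} = \phi(P)$, while the first assertion applied to $P = P^k$ gives $\phi(P)^k = \phi(P)$. Multiplying the latter on the right by $\phi(P)$ yields $\phi(P)^{k+1} = \phi(P)^2$, and comparing with $\phi(P)^{k+1} = \phi(P)$ gives $\phi(P)^2 = \phi(P)$, so $\phi(P) \in \idem(M_n)$.

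The third assertion is the heart of the lemma, and the plan is to bypass additivity of $\phi$ (which is not yet available) by a single well-chosen application of \eqref{eq:a certain product}. I first record the auxiliary identity that for every $P \in \idem(M_n)$ and every $Y \in M_n$ one has
$$\phi(P \circ Y) = \phi(P) \circ \phi(Y);$$
indeed, \eqref{eq:a certain product} with $A = P$, $B = Y$, together with $P^k = P$ and $\phi(P)^k = \phi(P)$ (second assertion), gives exactly this. The decisive observation is purely algebraic: for an idempotent $P$ the matrix $S := 2P - I$ is an involution, $S^2 = I$, and
$$P Y P = S \circ (P \circ Y) = 2\,P \circ (P \circ Y) - P \circ Y, \qquad Y \in M_n.$$
Since $\F$ is algebraically closed and $S$ is diagonalizable with eigenvalues $\pm 1$, it admits a $k$-th root $A \in M_n$, i.e.\ $A^k = 2P - I$.

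Taking $Y = X^k$, I then compute directly. Using the displayed factorization, the identity \eqref{eq:a certain product}, the auxiliary identity, and the first assertion,
$$\phi(P X^k P) = \phi\left(A^k \circ (P \circ X^k)\right) = \phi(A)^k \circ \phi(P \circ X^k) = \phi(A)^k \circ \left(\phi(P) \circ \phi(X)^k\right).$$
It remains to evaluate $\phi(A)^k = \phi(A^k) = \phi(2P - I)$. Writing $2P - I = P - (I - P)$ as a combination of the orthogonal idempotents $P$ and $I - P$, the orthoadditivity of Lemma~\ref{le:basic properties II}(g) together with $\phi(\lambda R) = \omega(\lambda)\phi(R)$ and $\omega(-1) = -1$ (valid since $\omega$ is a ring monomorphism and $\chr(\F) \ne 2$) gives $\phi(2P - I) = \phi(P) - \phi(I - P)$; combined with $\phi(P) + \phi(I - P) = \phi(I) = I$, this yields $\phi(A)^k = 2\phi(P) - I$. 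Finally, since $\phi(P)$ is an idempotent, the same involution identity applied to $\phi(P)$ gives $(2\phi(P) - I) \circ (\phi(P) \circ \phi(X)^k) = \phi(P)\,\phi(X)^k\,\phi(P)$, which is the desired equality.

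The main obstacle is precisely the one this argument is designed to circumvent: the naive derivation would expand $P X^k P = 2\,P \circ (P \circ X^k) - P \circ X^k$ and push $\phi$ through the difference, which requires additivity of $\phi$, the very conclusion of the theorem. Recognizing $2P - I$ as an involution, hence a $k$-th power over the algebraically closed field $\F$, collapses that linear combination into one instance of the multiplicative identity \eqref{eq:a certain product}, so that the only additivity actually invoked is the orthoadditivity of Lemma~\ref{le:basic properties II}(g) on the single orthogonal pair $P,\, I - P$.
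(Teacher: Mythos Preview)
Your proof is correct and follows essentially the same strategy as the paper: express $PX^kP$ via iterated Jordan products involving $S = 2P - I = P - (I-P)$, then compute $\phi(S) = 2\phi(P) - I$ using Lemma~\ref{le:basic properties II}(g) together with the homogeneity from Lemma~\ref{le:h exists}. Two minor differences are worth noting. For the second assertion, you give a clean power-counting argument ($\phi(P)^k = \phi(P)$ and $\phi(P)^{k+1} = \phi(P)$ force $\phi(P)^2 = \phi(P)$), whereas the paper deduces idempotency from $\phi(P)\phi(I-P) = 0$ together with $\phi(P) + \phi(I-P) = I$. For the third assertion, you factor $PX^kP = S \circ (P \circ X^k)$ and then extract a $k$-th root $A$ of $S$ (using algebraic closedness) to fit the pattern $A^k \circ B$; the paper instead uses the equivalent factorization $PX^kP = (S \circ X^k) \circ P^k$, which places the $k$-th powers on $X$ and on $P$ directly and so makes the root-taking step unnecessary.
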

	\begin{proof}
		For any $X \in M_n$ we have
		$$\phi(X^k) = \phi(X^k \circ I) = \phi(X)^k \circ \phi(I) = \phi(X)^k.$$
		Fix some $P \in \idem(M_n)$ and denote $P^\perp := I-P$. Clearly $P \perp P^\perp$ so by Lemma \ref{le:basic properties II}  (f) we have
		$$I = \phi(I) = \phi(P + P^\perp) = \phi(P) + \phi(P^\perp) \, \implies \, \phi(P^\perp) = I-\phi(P).$$
		By (c) of  the same lemma we obtain $\phi(P^\perp) \perp \phi(P)$, so 
		$$0 = \phi(P)\phi(P^\perp) = \phi(P)(I-\phi(P)) = \phi(P)-\phi(P)^2,$$
		which implies that $\phi(P)$ is an idempotent. Hence, $\phi$ maps idempotents to idempotents. We also see that $\phi(P^\perp) = \phi(P)^\perp.$
		
		\smallskip
		
		Fix some $X \in M_n$. One easily verifies the equality
		\begin{equation}\label{eq:PXP}
			((P-P^\perp) \circ X^k) \circ P = PX^kP.
		\end{equation}
		We also have
		\begin{equation}\label{eq:P-Pperp}
			\phi(P-P^\perp) \stackrel{\text{Lemma }\ref{le:basic properties II} (g)}= \phi(P) + \phi(-P^\perp)  \stackrel{\text{Lemma \ref{le:h exists}}}= \phi(P) - \phi(P)^\perp.
		\end{equation}
		Hence, as $P^k = P$ and $\phi(P)^k=\phi(P)$, we have
		\begin{align*}
			\phi(PX^kP)\,\, &\leftstackrel{\eqref{eq:PXP}}= \phi\left(((P-P^\perp) \circ X^k) \circ P^k\right) = \left(\phi(P-P^\perp) \circ \phi(X)^k\right) \circ \phi(P)^k \\
			&\leftstackrel{\eqref{eq:P-Pperp}}= \left((\phi(P) - \phi(P)^\perp) \circ \phi(X)^k \right) 
			\circ \phi(P) \\
			&\leftstackrel{\eqref{eq:PXP}}= \phi(P)\phi(X)^k \phi(P).
		\end{align*}
	\end{proof}
	
	\begin{proof}[Proof of Theorem \ref{thm:main}]
		Suppose that $\phi$ is not the zero map. Since $\phi(0)$ is a $(k+1)$-potent (Lemma~\ref{le:basic properties II} (a)), without loss of generality we can assume that $\phi(0) = \diag(Q,0_{n-r})$ for some invertible $(k+1)$-potent $Q \in M_r$ where $0 \le r \le n$. Indeed, any $0$-blocks in the Jordan form of a $(k+1)$-potent, if present, are necessarily one-dimensional.

		Assume that $r \ge 1$. Then we claim that $\phi$ is the constant map equal to $\phi(0)$. Indeed, as $Q^k = I_r$, first note that for all $X \in M_n$ we have
		$$\diag(Q,0_{n-r}) = \phi(0) = \phi(X \circ 0^k) = \phi(X) \circ \phi(0)^k = \phi(X) \circ \diag(I_r,0_{n-r}),$$
		which easily implies that 
		\begin{equation}\label{eq:X to Y}
			\phi(X) = \diag(Q, \psi(X)),
		\end{equation}
		for some uniquely determined matrix $\psi(X) \in M_{n-r}$. In particular, if $r = n$, it follows that $\phi$ is the constant map globally equal to the matrix $Q$. Otherwise, it makes sense to consider the map $\psi : M_n \to M_{n-r}$ defined by \eqref{eq:X to Y}, which satisfies \eqref{eq:a certain product} and $\psi(0) = 0$. Since $n-r < n$, Lemma \ref{le:basic properties II} (d) implies that $\psi$ must be the zero map, and therefore $\phi(X) = \phi(0)$ for all $X \in M_n$.
		
		Suppose now that $r = 0$, i.e.\ that $\phi(0) = 0$. We claim that $\phi$ takes the form given in \eqref{eq:form of phi}, and as a result, is a nonzero additive map. As in Lemma \ref{le:h exists} and its proof (by passing to the map $\varepsilon^{-1}T^{-1}\phi(\cdot)T$, if necessary), without loss of generality we can assume that
		$$\phi(E_{jj}) = E_{jj}, \quad \text{ for all }j \in [n].$$
		Consequently, by Lemma \ref{le:basic properties II} (f),
		$$
		\phi(P) = P, \quad\text{ for all } P \in \pot_{k+1}(M_n) \cap \ca{D}_n. 
		$$
		In particular, $\phi(I) = I$ which by Lemma \ref{le:preserves triple product} implies that $\phi$ maps idempotents to idempotents. Again, by Lemma \ref{le:h exists}, for all distinct $i,j \in [n]$ we also have
		$$\phi(E_{ij}) \propto E_{ij} \quad \text{ or } \quad \phi(E_{ij}) \propto E_{ji}.$$
		We claim that the same option holds throughout. Consider the set
		$$\ca{S} := \{(r,s)\in [n]^2 \setminus \Delta_n : \phi(E_{rs}) \propto E_{rs}\}.$$
		We claim that $\ca{S} = \emptyset$ or it satisfies the assumptions of Lemma \ref{le:contains entire CxC}. Assume that $\ca{S}$ is nonempty and fix some $(i,j) \in \ca{S}$. Denote $\phi(E_{ij}) = \beta_{ij} E_{ij}$ for some $\beta_{ij} \in \F^\times$. We first claim that $(j,i) \in \ca{S}$ as well. Suppose the contrary, that $\phi(E_{ji}) = \gamma_{ji} E_{ij}$ for some $\gamma_{ji} \in \F^\times$. Then 
		\begin{align*}
			\phi\left(\frac12(E_{ii}+E_{jj}+ E_{ij})\right) &= \phi((E_{jj}+E_{ji})^k \circ E_{ij}) = \phi(E_{jj}+E_{ji})^k \circ \phi(E_{ij}) \\
			&\leftstackrel{\eqref{eq: Eii+Eij 2}}= (E_{jj} + \gamma_{ji} E_{ij})^k \circ (\beta_{ij} E_{ij}) \\
			&= \beta_{ij} E_{ij}.
		\end{align*}
		This is a contradiction with Lemma \ref{le:zero map} since the $(k+1)$-th power of the matrix $\frac12(E_{ii}+E_{jj}+ E_{ij})$ is not equal to zero, but this is true of $\beta_{ij} E_{ij}$. The next objective is to show that $(i,p)\in\ca{S}$ for any $p \in [n]\setminus\{i\}$, and $(\ell,j)\in \ca{S}$ for any $\ell \in [n]\setminus\{j\}$.
		\begin{itemize}
			\item[--] Assume that $p \in [n]\setminus\{i,j\}$ and that $\phi(E_{ip}) = \gamma_{ip} E_{pi}$ for some $\gamma_{ip} \in \F^\times$. Then by Lemma \ref{le:h exists} we have
			\begin{align*}
				0 &= \phi(0) = \phi((E_{jj}+E_{ij})^k \circ E_{ip}) \\
				&= \phi(E_{jj}+E_{ij})^k \circ \phi(E_{ip}) \stackrel{\eqref{eq: Eii+Eij 1}}= (E_{jj} + \beta_{ij} E_{ij})^k \circ (\gamma_{ip} E_{pi}) \\
				&= \frac{\beta_{ij}\gamma_{ip}}{2} E_{pj},
			\end{align*}
			is a contradiction as both scalars $\beta_{ij}$ and $\gamma_{ip}$ are nonzero. Consequently,  $\phi(E_{ip})  \propto E_{ip}$, that is $(i,p) \in \ca{S}$.
			\item[--] Assume that $\ell \in [n]\setminus\{i,j\}$ and that $\phi(E_{\ell j}) = \gamma_{\ell j} E_{j\ell}$ for some $\gamma_{\ell j} \in \F^\times$. Then, again by Lemma \ref{le:h exists}, 
			\begin{align*}
				0 &= \phi(0) = \phi((E_{ii}+E_{ij})^k \circ E_{\ell j}) \\
				&= \phi(E_{ii}+E_{ij})^k \circ \phi(E_{\ell j}) \stackrel{\eqref{eq: Eii+Eij 1}}= (E_{ii} + \beta_{ij} E_{ij})^k \circ (\gamma_{\ell j} E_{j\ell}) \\
				&= \frac{\beta_{ij}\gamma_{\ell j}}{2} E_{i\ell},
			\end{align*}
			is again a contradiction, so that $\phi(E_{\ell j})  \propto E_{\ell j}$ and hence $(\ell,j) \in \ca{S}$.
		\end{itemize}
		By Lemma \ref{le:contains entire CxC} it follows that $\ca{S} = [n]^2 \setminus \Delta_n$. On the other hand, if $\ca{S} = \emptyset$, by passing to the map $\phi(\cdot)^t$ we arrive at the same conclusion. Hence, in the sequel without loss of generality we can assume that there exists a map $g : [n]^2 \to \F^\times$ such that $g|_{\Delta_n} \equiv 1$ and
		$$\phi(E_{ij}) = g(i,j) E_{ij}, \quad \text{ for all }i,j \in [n].$$
		We claim that the map $g$ is transitive in the sense of \cite{Coelho}, i.e.\ it satisfies
		$$g(i,j)g(j,p) = g(i,p), \quad \text{ for all }(i,j), (j,p) \in [n]^2.$$ 
		It suffices to establish the above equality for pairs in $[n]^2 \setminus \Delta_n$.
		
		Fix $(i,j), (j,p) \in [n]^2 \setminus \Delta_n$. If $i \ne p$, then by Lemma \ref{le:h exists} we have
		\begin{align*}
			\frac12 g(i,p)E_{ip} &= \phi\left(\frac12 E_{ip}\right) =\phi((E_{ii}+E_{ij})^k \circ E_{jp}) = \phi(E_{ii}+E_{ij})^k \circ \phi(E_{jp}) \\
			&\leftstackrel{\eqref{eq: Eii+Eij 1}}= (E_{ii}+g(i,j)E_{ij})^k \circ (g(j,p)E_{jp})\\
			&= \frac12 g(i,j)g(j,p)E_{ip},
		\end{align*}
		which implies $g(i,p) = g(i,j)g(j,p)$.
		On the other hand, if $i = p$, then by Lemma \ref{le:h exists} we have
		\begin{align*}
			\frac{1}{2}\phi\left(E_{ii} + E_{jj} + E_{ji}\right) &= \phi\left(\frac{1}{2}\left(E_{ii} + E_{jj} + E_{ji}\right)\right) = \phi((E_{ii}+E_{ij})^k \circ E_{ji}) \\
			&= \phi(E_{ii}+E_{ij})^k \circ \phi(E_{ji}) \\
			&\leftstackrel{\eqref{eq: Eii+Eij 1}}= (E_{ii}+ g(i,j)E_{ij})^k \circ (g(j,i)E_{ji})\\
			&= \frac12 g(j,i) \Big(g(i,j)(E_{ii}+E_{jj}) + E_{ji}\Big).
		\end{align*}
		Furthermore, we have
		\begin{align*}
			E_{ii} +\frac12g(j,i) E_{ji} \,\, &\leftstackrel{\eqref{eq: Eii+Eij 1}}= \phi\left(E_{ii} +\frac12 E_{ji}\right) =\phi\left((E_{ii} + E_{jj} + E_{ji}) \circ E_{ii}^k\right) \\
			&= \phi\left(E_{ii} + E_{jj} + E_{ji}\right) \circ \phi(E_{ii})^k\\
			&=  g(j,i) \Big(g(i,j)(E_{ii}+E_{jj}) + E_{ji}\Big) \circ E_{ii} \\
			&= g(j,i)\left(g(i,j) E_{ii} +\frac12 E_{ji}\right)
		\end{align*}
		We obtain 
		$$
		g(i,j)g(j,i) = 1,
		$$
		as desired. Following \cite{Coelho}, denote by $$g^* : M_n \to M_n, \qquad g^*(E_{ij}) := g(i,j)E_{ij}$$
		the induced algebra automorphism of $M_n$. Note that $g^*$ is implemented via conjugation by the invertible diagonal matrix
		$$
		D := \operatorname{diag}\big(g(1,1), g(2,1), \ldots, g(n,1)\big) \in \ca{D}_n,
		$$
		i.e.\ $g^*(\cdot)=D(\cdot)D^{-1}$. Hence, by passing to the map $(g^*)^{-1}(\phi(\cdot)) = D^{-1}\phi(\cdot)D$, without loss of generality we can assume that 
		$$\phi(E_{ij}) = E_{ij}, \quad \text{ for all }  (i,j) \in [n]^2.$$
		In view of Lemma \ref{le:h exists}, denote by $\omega :\F \to \F$ the induced ring monomorphism that satisfies \eqref{eq:homogeneity}. We claim that 
		\begin{equation}\label{eq:phi = omega}
			\phi(X) = \omega(X), \quad \text{ for all } X \in M_n.
		\end{equation}
		First assume that $X=[x_{ij}] \in M_n$ is of the form $X = Y^k$ for some $Y \in M_n$. Fix $(i,j) \in [n]^2$ and denote $\phi(X)=[\phi(X)_{ij}]$. If $i = j$, then by Lemma \ref{le:preserves triple product} we have 
		\begin{align*}
			\omega(x_{ii})E_{ii} &= \phi(x_{ii}E_{ii}) = \phi(E_{ii}Y^k E_{ii}) = \phi(E_{ii})\phi(Y)^k \phi(E_{ii}) = E_{ii}\phi(X) E_{ii} \\
			&= \phi(X)_{ii}E_{ii},
		\end{align*}
		which shows that
		\begin{equation}\label{eq:Yii}
			\phi(X)_{ii} = \omega(x_{ii}).
		\end{equation} Now assume $i \ne j$. Denote $P:= E_{ii}+E_{ji}$ and note that 
		$$
		PAP = (a_{ii}+a_{ij})P, \quad \text{ for all matrices $A=[a_{ij}] \in M_n$}.
		$$ 
		By \eqref{eq: Eii+Eij 1} we have $\phi(P) = P$ and hence
		\begin{align*}
			\omega(x_{ii}+x_{ij})P &=\phi((x_{ii}+x_{ij})P) = \phi(PXP) \stackrel{\text{Lemma }\ref{le:preserves triple product}}= \phi(P)\phi(X)\phi(P) \\
			&= P\phi(X)P = (\phi(X)_{ii}+\phi(X)_{ij})P.
		\end{align*}
		Therefore, by the additivity of $\omega$ and \eqref{eq:Yii} we obtain
		$$\omega(x_{ii})+\omega(x_{ij}) = \phi(X)_{ii}+\phi(X)_{ij} = \omega(x_{ii})+\phi(X)_{ij}.$$
		This implies $\phi(X)_{ij} = \omega(x_{ij})$, which shows \eqref{eq:phi = omega} for all matrices $X \in M_n(\F)$ which are a $k$-th power of some matrix. Using the notation of Lemma \ref{le:iterated R}, this means 
		$$\phi(X) = \omega(X), \quad \text{ for all } X \in \ca{R}_0^n.$$
		We now extend this conclusion to all matrices in $M_n$. Note that if $A,B \in M_n$ satisfy $\phi(A) = \omega(A)$ and $\phi(B) = \omega(B)$, then clearly
		$$\phi(A^k \circ B) = \phi(A)^k \circ \phi(B) = \omega(A)^k \circ \omega(B) = \omega(A^k \circ B).$$
		Again returning to the notation of Lemma \ref{le:iterated R}, this inductively implies that $$\phi(X) = \omega(X), \quad \text{ for all }  X\in \ca{R}^n.$$
		By the same lemma we have $\ca{R}^n = M_n$, which completes the proof of the theorem.
	\end{proof}
	We conclude the paper with the following generalization of \cite[Corollary~3.5]{GogicTomasevic-AM}, which follows immediately from the first part of the proof of Theorem~\ref{thm:main}.
	\begin{corollary} 
		Let $m < n$. A map $\phi : M_n \to M_m$ satisfies \eqref{eq:a certain product} if and only if it is constant and equal to a fixed $(k+1)$-potent. 
	\end{corollary}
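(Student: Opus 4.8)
The plan is to derive the corollary directly from the machinery already assembled for Theorem~\ref{thm:main}, exploiting the strict inequality $m<n$ to collapse the nonconstant alternative. The \emph{if} direction is immediate: if $\phi\equiv C$ for a fixed $C\in\pot_{k+1}(M_m)$, then both sides of \eqref{eq:a certain product} equal $C$, since $C^k\circ C=\tfrac12(C^kC+CC^k)=C^{k+1}=C$. For the converse, I would first record that $\phi(0)$ is automatically a $(k+1)$-potent: applying Lemma~\ref{le:basic properties II}~(a) to the identity $0^{k+1}=0$ gives $\phi(0)=\phi(0)^{k+1}$.

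The decisive point is the rank inequality of Lemma~\ref{le:basic properties II}~(d): any map satisfying \eqref{eq:a certain product} with $\phi(0)=0$ and $\phi\ne 0$ must have $m\ge n$. Since we are assuming $m<n$, this rules out the case ``$\phi(0)=0$ and $\phi\ne 0$'' entirely. Hence either $\phi$ is the zero map --- which is constant, equal to the $(k+1)$-potent $0$ --- or $\phi(0)\ne 0$. It therefore remains only to treat the latter case.

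Assume $\phi(0)\ne 0$. Following the first part of the proof of Theorem~\ref{thm:main}, I would conjugate so that $\phi(0)=\diag(Q,0_{m-r})$ for an invertible $(k+1)$-potent $Q\in M_r$ with $1\le r\le m$; this is possible because the zero blocks in the Jordan form of a $(k+1)$-potent are one-dimensional. Setting $B=0$ in \eqref{eq:a certain product} and using $\phi(0)^k=\diag(I_r,0_{m-r})$ yields
$$
\phi(X)\circ\diag(I_r,0_{m-r})=\phi(0)\qquad\text{for all }X\in M_n,
$$
which forces $\phi(X)=\diag(Q,\psi(X))$ for a uniquely determined $\psi(X)\in M_{m-r}$. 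The resulting map $\psi:M_n\to M_{m-r}$ again satisfies \eqref{eq:a certain product} and has $\psi(0)=0$. Now the dimension count does the work: because $m<n$ we have $m-r<n$, so Lemma~\ref{le:basic properties II}~(d) applies once more and forces $\psi\equiv 0$. Consequently $\phi(X)=\phi(0)$ for every $X$, i.e.\ $\phi$ is constant, equal to the $(k+1)$-potent $\phi(0)$.

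I do not expect a genuine obstacle here, since every ingredient is already in place; the only points requiring care are verifying that the reduced map $\psi$ inherits both the identity \eqref{eq:a certain product} and the normalization $\psi(0)=0$, and observing that the strict inequality $m<n$ guarantees $m-r<n$ so that Lemma~\ref{le:basic properties II}~(d) remains applicable to $\psi$. These are precisely the places where the hypothesis $m<n$ is used, and they are exactly what distinguishes this situation from the $m=n$ case treated in the theorem, where the analogous reduction left open the nonconstant branch.
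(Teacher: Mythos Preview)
Your argument is correct and follows exactly the route the paper indicates, namely the first part of the proof of Theorem~\ref{thm:main} combined with Lemma~\ref{le:basic properties II}~(d) to kill the nonconstant branch via the strict inequality $m<n$. One small slip: the displayed identity $\phi(X)\circ\diag(I_r,0_{m-r})=\phi(0)$ comes from setting $A=0$ (so that $\phi(0)^k$ appears), not $B=0$; with that correction the writeup is fine.
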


\end{document}